\newenvironment{smallermatrix}[1][c]
{\null\,\vcenter\bgroup
  \Let@\restore@math@cr\default@tag
  \baselineskip0pt \lineskip0.4pt \lineskiplimit0pt
  \ialign\bgroup\if#1l\else\hfil\fi$\m@th\scriptstyle##$\if#1r\else\hfil\fi&&\thickspace\hfil
  $\m@th\scriptstyle##$\hfil\crcr
}{%
  \crcr\egroup\egroup\,%
}
\NewDocumentCommand{\ts}{O{c} e{^?_}}{
  \begin{smallermatrix}[#1]
  \mathstrut\IfValueT{#2}{#2} \\
  \mathstrut\IfValueT{#3}{#3} \\
  \mathstrut\IfValueT{#4}{#4}
  \end{smallermatrix}%
}
\newtheorem{thm}{Theorem}
\newtheorem{prop}[thm]{Proposition}
\newtheorem{lem}[thm]{Lemma}
\newtheorem{question}[thm]{Question}
\newtheorem{cor}[thm]{Corollary}
\newtheorem{conj}[thm]{Conjecture}
\newcommand\reallywidehat[1]{%
\savestack{\tmpbox}{\stretchto{%
  \scaleto{%
    \scalerel*[\widthof{\ensuremath{#1}}]{\kern-.6pt\bigwedge\kern-.6pt}%
    {\rule[-\textheight/2]{1ex}{\textheight}}
  }{\textheight}%
}{0.5ex}}%
\stackon[1pt]{#1}{\tmpbox}%
}
\DeclareFontFamily{U}{wncy}{}
    \DeclareFontShape{U}{wncy}{m}{n}{<->wncyr10}{}
    \DeclareSymbolFont{mcy}{U}{wncy}{m}{n}
    \DeclareMathSymbol{\Sh}{\mathord}{mcy}{"58}
\begin{document}
\title{Effective generation of Hecke algebras and explicit estimates of Sato--Tate type}
\author{Ben Moore}
\address{Mathematics Institute, University of Warwick, CV4 7AL, UK, and School of Mathematical and Computer Sciences,
University of Heriot--Watt, Edinburgh EH14 4AS, UK}
\date{December 2023}

\maketitle

\begin{abstract}
     Assuming the Riemann hypothesis for $L$-functions attached to primitive Dirichlet characters, modular cusp forms, and their tensor products and symmetric squares, we write down explicit finite sets of Hecke operators that span the Hecke algebras acting on the spaces of modular forms of weight greater than two with squarefree level.
\end{abstract}

\section{Introduction and overview}
Let $\mathscr{M}_{k}(N,\chi)$ denote the space of modular forms of weight $k$, level $N$ and nebentypus $\chi$. Then $\mathscr{M}_{k}(\Gamma_0(N))=\mathscr{M}_{k}(N,\mathbf{1}_{N})$, where $\mathbf{1}_{N}$ is the principal character of order $N$, and $\mathscr{M}_{k}(\Gamma_1(N))=\bigoplus_{\lvert\chi\rvert=N}\mathscr{M}_{k}(N,\chi)$. Let $\mathbb{T}_{k}(N,\chi)$ denote the $\mathbb{C}$-algebra of Hecke operators acting on the space $\mathscr{M}_{k}(N,\chi)$, let $\mathbb{T}_{k}(\Gamma_1(N))$ denote the algebra of Hecke operators on $\mathscr{M}_{k}(\Gamma_1(N))$, and write $\mathbb{T}_{k}(\Gamma_0(N))$ for $\mathbb{T}_{k}(N,\mathbf{1}_{N})$. For each of these Hecke algebras $\mathbb{T}$, we write $\mathbb{T}\vert_{\mathscr{S}}$ for the restriction of $\mathbb{T}$ to the subspace of forms vanishing at the cusp at infinity.

A famous theorem of Sturm \cite{sturm} asserts that if $\Gamma^{\prime}$ is a congruence subgroup of index $m^{\prime}$ in $SL(2,\mathbb{Z})$, then any $f \in \mathscr{M}_{k}(\Gamma^{\prime})$ with Fourier coefficients $a_j(f)=0$ for $j\leq km^{\prime}/12$ must be zero. This leads naturally to: 
\begin{question}\label{question}
    Suppose that for $f \in \mathscr{M}_{k}(\Gamma^{\prime})$, we know that $a_j(f)=0$ for $j=1,\dots ,n$. How big must $n$ be before we can conclude that $f=0$?
\end{question} 
This question admits a natural reinterpretation in terms of finding generators for Hecke algebras. Rewritten in terms of Hecke algebras and specialised to the case where $\Gamma^{\prime}$ is either $\Gamma_0(N)$ or $\Gamma_1(N)$, Sturm's bound implies that
\begin{equation*}
    \mathbb{T}_{k}(\Gamma^{\prime})\rvert_{\mathscr{S}}=\mathrm{Span}_{\mathbb{C}}\{T_1,\dots,T_{\lfloor km^{\prime}/12\rfloor}\}.
\end{equation*} Then Question \ref{question}, for these $\Gamma^{\prime}$, is equivalent to asking for an explicit presentation for the Hecke algebras $\mathbb{T}_k(N,\chi)$ and $\mathbb{T}_k(\Gamma_1(N))$, where the action is on the full space of modular forms of the indicated type. But in this case, far less is known. When $N=1$, we have \begin{equation*}
    \mathbb{T}_{k}(\Gamma_0(1)))=\mathrm{Span}_{\mathbb{C}}\left\{T_1,\dots, T_{\lfloor k/12 \rfloor}\right\}:
\end{equation*} this follows from the existence of the Miller basis. But neither this argument, nor an alternative due to Jenkins \cite{jenkins}, give any hint towards answering Question \ref{question} by finding an effective upper bound, the \emph{Hecke bound}, depending on $k$ and $N$, for $n$ such that \begin{equation}\label{eq:originalproblem}
    \mathbb{T}_{k}(N,\chi)=\mathrm{Span}_{\mathbb{C}}\{T_1,\dots, T_n\},
\end{equation} or towards proving the analogous statement for $\mathbb{T}_{k}(\Gamma_1(N))$.

The only publicly available speculation concerning the Hecke bound is buried in the source code of the function \texttt{hecke\_bound()} in Sage, which posits that we may take \begin{equation}\label{eq:sagebound}
    n=\lfloor km/12 \rfloor+2\mathrm{dim}\mathscr{E}_{k}(\Gamma_0(N))+5
\end{equation} for $\mathbb{T}_{k}(\Gamma_0(N))$, where $m=[SL(2,\mathbb{Z}):\Gamma_0(N)]$ and $\mathrm{dim}\mathscr{E}_{k}(\Gamma_0(N))$ is the dimension of the Eisenstein subspace. But we note that the documentation for the function points out that whilst its $n$ has never been observed to fail for $\mathbb{T}_{k}(\Gamma_0(N))$, it is not reliable for spaces of modular forms on $\Gamma_1(N)$ (if $m:=[SL(2,\mathbb{Z}): \Gamma_1(N)]$ and $\mathrm{dim}\mathscr{E}_{k}(\Gamma_0(N))$ is replaced by $\mathrm{dim}\mathscr{E}_{k}(\Gamma_1(N))$): for $\mathbb{T}_{4}(\Gamma_1(17))$ it returns $n=15$, which is incorrect. As far as the author is aware, no work has been done towards collating numerical evidence to determine of the true order of growth of the Hecke bound.

We provide a partial solution towards finding an upper bound for the Hecke bound by showing if the Riemann hypothesis is true for certain lists of $L$-functions, we can write down explicit sets of generators for $\mathbb{T}_{k}(\Gamma_0(N))$, at least when $N$ is squarefree and $k\geq 4$. We take this opportunity to define this list of $L$-functions: for each divisor $M$ of $N$, let $\mathscr{L}(M)$ denote the set consisting of \begin{enumerate}
    \item All the automorphic forms $\mathrm{Sym}^2(f)$ as $f$ ranges over Hecke eigenbases for $\mathscr{S}_{k}^{new}(\Gamma_0(d))$ and $d$ ranges over all divisors of $M$;
    \item All the automorphic forms $f\otimes g$ as $f$ and $g$ range over all distinct forms in Hecke eigenbases for $\mathscr{S}_{k}^{new}(\Gamma_0(d))$ and $d$ ranges over all divisors of $M$;
    \item All the primitive Dirichlet characters $\chi$ of conductor $d>1$ dividing $M$, and
    \item All the modular forms of the form $f\otimes \chi$, where $\chi$ runs over all primitive Dirichlet characters of conductor dividing $M$ and $f$ runs over Hecke eigenbases for $\mathscr{S}_{k}^{new}(\Gamma_0(d))$ as $d$ runs over all divisors of $M$. 
\end{enumerate} For each $\phi \in \mathscr{L}(M)$, we have an $L$-function, defined as follows (with the analytic normalisation): \begin{center}
\begin{tabular}{ c c } 
 
 $\phi$ & $L(\phi,s)$  \\ 
 \hline
 $\mathrm{Sym}^2 f$ & $\prod_{p}\left((1-\alpha_{f,1}(p)^2p^{-s})(1-\alpha_{f,1}\alpha_{f,2}(p)p^{-s})(1-\alpha_{f,2}(p)^2p^{-s})\right)^{-1}$  \\ 
 $f\otimes g$ & $\prod_{p}\prod_{1\leq i,j\leq 2}\left(1-\alpha_{f,i}(p)\alpha_{g,j}p^{-s}\right)^{-1}$  \\ 
 $\chi$ & $\prod_{p}\left(1-\chi(p)p^{-s}\right)^{-1}$ \\
 $f\otimes \chi$ & $\prod_{p}\left((1-\chi(p)\alpha_{f,1}(p)p^{-s})(1-\chi(p)\alpha_{f,2}(p)p^{-s})\right)^{-1}$ \\
 
\end{tabular}
\end{center} The Ramanujan conjecture guarantees that $|\alpha_{f,i}(p)|=1$ for $p\nmid N_f$.

\begin{conj}\label{conj:GRH}
    For every $\phi \in \mathscr{L}(M)$, the nontrivial zeros of $L(\phi,s)$ all have real part equal to $\frac{1}{2}$.
\end{conj}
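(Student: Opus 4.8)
The plan is to recognise Conjecture \ref{conj:GRH} for what it is: the Grand Riemann Hypothesis for an explicitly enumerated, finite list of $L$-functions, each of which is (in the cases at hand, provably) the standard $L$-function of an automorphic representation of some $\mathrm{GL}_n/\mathbb{Q}$. So the natural route is \emph{not} to attack each $L(\phi,s)$ by ad hoc means, but to (i) identify the automorphic object whose standard $L$-function is $L(\phi,s)$, (ii) record that this $L$-function therefore has the expected meromorphic continuation, functional equation, Euler product, and Ramanujan bound, placing it in the Selberg class, and then (iii) invoke GRH for that object. Steps (i) and (ii) are where there is actual content to write down; step (iii) is the obstacle.

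For step (i) the four families are handled as follows. A primitive Dirichlet character $\chi$ gives the classical Dirichlet $L$-function, a $\mathrm{GL}_1$ object, for which the statement is the classical GRH. The twist $f\otimes\chi$ of a newform $f\in\mathscr{S}_{k}^{new}(\Gamma_0(d))$ by a primitive character is again a holomorphic cusp form of weight $k$ on some congruence subgroup $\Gamma_1(M)$, so $L(f\otimes\chi,s)$ is the standard $\mathrm{GL}_2$ $L$-function of a holomorphic modular form, and Deligne's theorem (not merely the Ramanujan conjecture) gives $|\alpha|=1$ at the good primes as stated in the table. For $\mathrm{Sym}^2 f$ one appeals to Gelbart--Jacquet: the symmetric square of a cuspidal automorphic representation of $\mathrm{GL}_2/\mathbb{Q}$ is an automorphic representation of $\mathrm{GL}_3/\mathbb{Q}$, whose standard $L$-function is exactly the degree-three Euler product displayed. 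For the Rankin--Selberg convolution $f\otimes g$ with $f\neq g$ one appeals to Ramakrishnan's automorphy of the Rankin--Selberg product: $f\boxtimes g$ is an automorphic representation of $\mathrm{GL}_4/\mathbb{Q}$, with standard $L$-function the degree-four Euler product displayed. In each case the archimedean factor, conductor and sign of the functional equation are then the standard ones attached to these representations, and assembling them is routine.

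The main obstacle is step (iii), and it is fatal: even for $n=1$, showing that the nontrivial zeros of $L(\chi,s)$ lie on $\mathrm{Re}(s)=\tfrac12$ is the Riemann Hypothesis for Dirichlet $L$-functions, which is open; for $n=2,3,4$ we are demanding GRH for $\mathrm{GL}_2$, $\mathrm{GL}_3$ and $\mathrm{GL}_4$ automorphic $L$-functions, which is strictly harder and equally open. Nothing in the structure theory above -- functoriality, the functional equation, known subconvexity or zero-density bounds -- produces the line $\mathrm{Re}(s)=\tfrac12$ as a zero-free line, and no nontrivial $L$-function of this type has ever been shown to satisfy its Riemann Hypothesis. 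Consequently the honest ``proof'' is only a reduction: Conjecture \ref{conj:GRH} follows from (and in the $\mathrm{GL}_1$ part is equivalent to, while in the higher-rank parts is implied by) the general Grand Riemann Hypothesis for automorphic $L$-functions, and there is no prospect of removing it with current technology -- which is precisely why it is stated as a conjecture and carried as a standing hypothesis through the remainder of the paper rather than proved.
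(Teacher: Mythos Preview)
Your assessment is correct and matches the paper's own treatment: Conjecture~\ref{conj:GRH} is not proved in the paper at all, but is stated as a standing hypothesis (an instance of GRH for the listed automorphic $L$-functions) and is assumed throughout in order to derive Theorem~\ref{thm:effectiveST} and hence Theorem~\ref{thm:main}. There is no ``paper's own proof'' to compare against, and your explanation of why no proof is possible with current methods is accurate.
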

Our main result is then as follows:
\begin{thm}\label{thm:main}
    Suppose that for every $M\mid N$, Conjecture \ref{conj:GRH} is true. Then \begin{equation*}
    \mathbb{T}_{k}(\Gamma_0(N))=\mathrm{Span}_{\mathbb{C}}\bigcup_{M\mid N}\left\{T_{pN/M} \text{ s.t. }p \nmid M, p \leq X_{M}\right\},
\end{equation*} whenever \begin{align*}
        X_{M}&\geq \max\{6(k+1)^2,M,17.33\},\\
    \frac{X_{M}}{\log^4(X_M)}&\geq 60 k (\alpha+1)^2 (\epsilon_{M}+1)(s_{M}+1)\log(\epsilon_{M}+3)\log(s_{M}+3),\\
    \frac{X_{M}^{1/2}}{\log^2(X)}&\geq \max\{12(\alpha+1)s_{M}\log(s_{M}+3),k\epsilon_{M} (\alpha+1)^2+1\},\\
    \frac{X_{M}^{k/2}}{\log^2(X_M)}&\geq 24k (\alpha+1)(\epsilon_{M}+1)\log(\epsilon_{M}+3).
\end{align*} where \begin{gather*}
        \alpha = 2605+87k+(248+6k)\log(M),\\
        s_M=\sum_{d\mid M}\mathrm{dim}\mathscr{S}_{k}^{new}(\Gamma_0(d)), \quad
        \epsilon_M=\sum_{d\mid M}\mathrm{dim}\mathscr{E}_{k}^{new}(\Gamma_0(d)).
    \end{gather*}
\end{thm}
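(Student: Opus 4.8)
My plan is to dualise. By the Atkin--Lehner--Li theory of oldforms (and its Eisenstein analogue), for squarefree $N$ the set $\{V_e\phi\}$, with $M$ ranging over divisors of $N$, $\phi$ over Hecke newforms in $\mathscr S_k^{new}(\Gamma_0(M))\oplus\mathscr E_k^{new}(\Gamma_0(M))$, and $e$ over divisors of $N/M$, is a basis of $\mathscr M_k(\Gamma_0(N))$; here $V_e$ is the degeneracy operator $g(z)\mapsto g(ez)$. A standard dimension count --- using that on the $\ell$-old part of a newform orbit the operator $U_\ell$ is non-scalar and satisfies its quadratic Hecke relation, so contributes a two-dimensional local algebra --- gives $\dim\mathbb T_k(\Gamma_0(N))=\dim\mathscr M_k(\Gamma_0(N))$, and shows the pairing $\mathbb T_k(\Gamma_0(N))\times\mathscr M_k(\Gamma_0(N))\to\mathbb C$, $(T,g)\mapsto a_1(Tg)$, is perfect. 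Hence it suffices to prove that the only $f=\sum_{\phi,e}c_{\phi,e}V_e\phi$ with $a_1(T_{pN/M}f)=0$ for every $M\mid N$ and every prime $p\le X_M$ with $p\nmid M$ is $f=0$; and for this I will invoke only the operators with $p\nmid N$.

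For $p\nmid N$ one has $T_{pN/M}=T_p\prod_{\ell\mid N/M}U_\ell$. Using $U_\ell V_e\phi=V_{e/\ell}\phi$ when $\ell\mid e$ and $U_\ell V_e\phi=a_\ell(\phi)V_e\phi-\ell^{k-1}V_{\ell e}\phi$ otherwise, together with $T_pV_{e'}\phi=a_p(\phi)V_{e'}\phi$, a short computation gives
\[
a_1(T_{pN/M}f)=\sum_\phi a_p(\phi)\,\gamma_\phi^{(M)},\qquad
\gamma_\phi^{(M)}=\sum_{e\mid\gcd(N/M_\phi,\,N/M)}c_{\phi,e}\prod_{\ell\mid(N/M)/e}a_\ell(\phi),
\]
a fixed linear form in the $c_{\phi,e}$, where $M_\phi$ is the level of $\phi$. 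So, for each fixed $M$, the hypothesis says $\sum_\phi\gamma_\phi^{(M)}a_p(\phi)=0$ for every prime $p\le X_M$ with $p\nmid N$. I would deduce $\gamma_\phi^{(M)}=0$ for all $\phi$ from positive-definiteness of the Gram matrix $G^{(M)}=\bigl(\sum_{p\le X_M,\,p\nmid N}a_p(\phi)\overline{a_p(\psi)}\bigr)_{\phi,\psi}$ via Gershgorin: for cuspidal $\phi$ the diagonal entry is $\gg X_M^{k}/\log X_M$ (since $a_p(\phi)^2=p^{k-1}+a_p(\mathrm{Sym}^2\phi)$, so Ramanujan plus GRH for $L(\mathrm{Sym}^2\phi,s)$ isolates the main term), while for the Eisenstein newform it is $\gg X_M^{2k-1}/\log X_M$; the cusp--cusp off-diagonal entries are $\ll X_M^{k-1/2}(\log\cdots)^2$ because $L(\phi\times\psi,s)$ is entire whenever $\phi\ne\psi$, and the cusp--Eisenstein entries are $\ll X_M^{(3k-2)/2}(\log\cdots)^2$ by GRH for $L(\phi,s)=L(\phi\otimes\mathbf 1,s)$ and partial summation. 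One handles the cuspidal block first and then the (essentially unique) Eisenstein newform separately, using $a_p(E_k)\ne0$; this is why the conditions split the contributions by archimedean size. The remaining members of $\mathscr L(M)$ --- the Dirichlet $L$-functions and the twists $L(\phi\otimes\chi,s)$ --- are needed in the non-generic (CM-type) cases, where the symmetric-square or Rankin--Selberg $L$-function ceases to be cuspidal and its GRH passes through $L$-functions of exactly those shapes, as well as in the elementary prime-counting estimates.

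Making this diagonal dominance explicit is where all the effort lies, and I expect it to be the main obstacle. It requires: (i) explicit bounds for the analytic conductors of the members of $\mathscr L(M)$ in terms of $k$ and $M$, using that all levels involved divide $M$ --- this is the source of the quantity $\alpha$; (ii) an explicit GRH prime-counting estimate, in the style of Schoenfeld and of explicit Chebotarev, for each of $L(\phi,s)$, $L(\mathrm{Sym}^2\phi,s)$, $L(\phi\times\psi,s)$ and the Dirichlet and twisted $L$-functions, with error terms of order $X^{1/2}(\log X)^2$ times a conductor factor; and (iii) assembling the Gershgorin inequalities, whose right-hand sides carry the number of newforms present, so that $s_M$ and $\epsilon_M$ enter, and then solving for $X_M$. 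Each of the four displayed lower bounds encodes one such comparison --- a ``valid range'' threshold for the explicit estimates; the cusp--cusp off-diagonal bound; a refinement of it together with the Eisenstein diagonal; and the cusp--Eisenstein cross-term --- with the dimension sums $s_M$, $\epsilon_M$ appearing exactly as the matrix sizes. Everything outside this bookkeeping is formal.

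Finally, granting $\gamma_\phi^{(M)}=0$ for every $M\mid N$ and every newform $\phi$, I fix $\phi$ and let $M$ run over the divisors of $N$ for which $N/M$ is supported on the primes dividing $N/M_\phi$. Writing $T_1=\{\ell:\ell\mid N/M\}$, the relation $\gamma_\phi^{(M)}=0$ becomes $\sum_{E\subseteq T_1}c_{\phi,\prod E}\prod_{\ell\in T_1\setminus E}a_\ell(\phi)=0$, in which the term $E=T_1$ contributes $c_{\phi,\prod T_1}$ with coefficient $1$ (empty product). Ordering the $T_1$ by cardinality, this is a unitriangular linear system, so $c_{\phi,e}=0$ for all $e\mid N/M_\phi$ by induction on the number of prime factors of $e$; hence $f=0$, as required.
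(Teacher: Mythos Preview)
Your overall strategy---dualise via $(T,g)\mapsto a_1(Tg)$, compute $a_1(T_{pN/M}f)$ in the newform basis, and then use a Gram-matrix argument to force vanishing---is essentially the same reduction the paper performs. The paper also reduces to showing that the matrix of $p$th Fourier coefficients of a basis of newforms has full rank, and does so by proving positivity of $\sum_{(p_i)}|\det(a_{p_i}(\phi_j))|^2$; by Cauchy--Binet this is exactly the determinant of your Gram matrix, and the paper's lengthy expansion of $|\det(E\mid S)|^2$ plays the role of your Gershgorin/Schur-complement step.

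There is, however, a genuine gap in the way you have organised the argument. You split it into two independent stages: first, for \emph{each} $M$, deduce $\gamma_\phi^{(M)}=0$ for \emph{all} newforms $\phi$ of level dividing $N$ from positive-definiteness of $G^{(M)}$; second, run the unitriangular induction. But in your first stage $G^{(M)}$ is indexed by all newforms of level dividing $N$, so it has size $s_N+\epsilon_N$, and the diagonal-dominance inequalities you must verify therefore involve $s_N,\epsilon_N$ and conductors up to $N$. The theorem, by contrast, allows $X_M$ to depend only on $s_M,\epsilon_M$ and $M$, which for small $M\mid N$ is far weaker. As written, then, your argument proves only the version in which every $X_M$ satisfies the $M=N$ inequalities---i.e.\ essentially the Corollary, not the Theorem.

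The fix is to interleave the two stages, which is exactly what the paper's block-triangular reduction $\det\bigl(\begin{smallmatrix}A&0\\C&D\end{smallmatrix}\bigr)=\det A\,\det D$ accomplishes. Process the $M\mid N$ by increasing $\omega(N/M)$. At the step for $M$, the inductive hypothesis ($c_{\phi,e}=0$ whenever $\omega(e)<\omega(N/M)$) kills every term of $\gamma_\phi^{(M)}$ except possibly $c_{\phi,N/M}$, and that one survives only when $N/M\mid N/M_\phi$, i.e.\ $M_\phi\mid M$. Hence the relation $\sum_\phi\gamma_\phi^{(M)}a_p(\phi)=0$ now involves only newforms of level dividing $M$, the relevant Gram matrix has size $s_M+\epsilon_M$, and the hypothesis on $X_M$ suffices. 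One minor point: the Dirichlet $L$-functions and the twists $f\otimes\chi$ in $\mathscr L(M)$ are there to control the Eisenstein--Eisenstein and Eisenstein--cusp cross terms arising from Eisenstein newforms with nontrivial characters (the paper's $F(X)$ and $G(X)$), not CM degenerations of $\mathrm{Sym}^2$ or Rankin--Selberg; in the squarefree trivial-nebentypus setting one has $\epsilon_M=1$ and most of that apparatus is dormant, so your attribution of their role is off, though it does not affect the logic.
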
 
Noting that $X_M N/M\leq X_{N} N$, we immediately obtain a solution to the problem at (\ref{eq:originalproblem}), albeit with a weaker bound for $\Gamma_0(N)$ than the one hypothesised at (\ref{eq:sagebound}). 
\begin{cor}\label{cor:support}
    Suppose that for every $M \mid N$, Conjecture \ref{conj:GRH} is true. Then \begin{equation*}
        \mathbb{T}_{k}(\Gamma_0(N))=\mathrm{Span}_{\mathbb{C}}\left\{T_1,\dots,T_n\right\}
    \end{equation*} whenever $n \geq NX$ for any $X$ such that \begin{align*}
        X&\geq \max\{6(k+1)^2,N,17.33\},\\
    \frac{X}{\log^4(X)}&\geq 60 k (\alpha+1)^2 (\epsilon_{N}+1)(s_{N}+1)\log(\epsilon_{N}+3)\log(s_{N}+3),\\
    \frac{X^{1/2}}{\log^2(X)}&\geq \max\{12(\alpha+1)s_{N}\log(s_{N}+3),k\epsilon_{N} (\alpha+1)^2+1\},\\
    \frac{X^{k/2}}{\log^2(X)}&\geq 24k (\alpha+1)(\epsilon_{N}+1)\log(\epsilon_{N}+3).
\end{align*} where \begin{gather*}
        \alpha = 2605+87k+(248+6k)\log(N),\\
        s_{N}=\mathrm{dim}\mathscr{S}_{k}(\Gamma_0(N)), \quad
        \epsilon_{N}=\mathrm{dim}\mathscr{E}_{k}(\Gamma_0(N)).
    \end{gather*}
\end{cor}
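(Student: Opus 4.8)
The plan is to deduce the corollary directly from Theorem \ref{thm:main} by a monotonicity argument in the divisor $M$. First I would record that for every $M\mid N$ the auxiliary quantities attached to $M$ in the theorem are dominated by those attached to $N$ in the corollary. Since every divisor of $M$ is a divisor of $N$, we have $\sum_{d\mid M}\dim\mathscr{S}_k^{new}(\Gamma_0(d))\leq \sum_{d\mid N}\dim\mathscr{S}_k^{new}(\Gamma_0(d))$, and by the newform decomposition $\dim\mathscr{S}_k(\Gamma_0(N))=\sum_{d\mid N}\sigma_0(N/d)\dim\mathscr{S}_k^{new}(\Gamma_0(d))$ the latter sum is at most $\dim\mathscr{S}_k(\Gamma_0(N))=s_N$; the identical reasoning gives $\epsilon_M\leq\epsilon_N$, and $M\leq N$ gives $\log M\leq\log N$, so the $M$-version of $\alpha$ is at most the $N$-version. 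Hence the right-hand sides of all four displayed inequalities of the theorem, read with $M$ in place of $N$, are no larger than the corresponding right-hand sides in the corollary.

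Now fix any $X$ satisfying the four inequalities of the corollary. By the previous step the constant choice $X_M:=X$ is admissible in Theorem \ref{thm:main} simultaneously for every $M\mid N$, and the theorem yields
\begin{equation*}
\mathbb{T}_k(\Gamma_0(N))=\mathrm{Span}_{\mathbb{C}}\bigcup_{M\mid N}\{T_{pN/M}\ :\ p\nmid M,\ p\leq X\}.
\end{equation*}
Every operator appearing on the right has index $pN/M\leq XN\leq n$, because $M\geq 1$, $p\leq X$, and $n\geq NX$ by hypothesis; thus the spanning set is contained in $\{T_1,\dots,T_n\}$, giving $\mathbb{T}_k(\Gamma_0(N))\subseteq\mathrm{Span}_{\mathbb{C}}\{T_1,\dots,T_n\}$. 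The reverse inclusion is immediate since each $T_j$ lies in $\mathbb{T}_k(\Gamma_0(N))$, and the claimed equality follows.

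I do not expect a serious obstacle here: the corollary is essentially a bookkeeping consequence of Theorem \ref{thm:main}. The only point needing a moment's care is the monotonicity of $s_M$ and $\epsilon_M$ in $M$, where one must remember that the corollary's $s_N=\dim\mathscr{S}_k(\Gamma_0(N))$ counts oldforms with multiplicity and so is (harmlessly) at least the theorem's $\sum_{d\mid N}\dim\mathscr{S}_k^{new}(\Gamma_0(d))$. This slack is precisely why the corollary can be phrased using the total dimensions of the cusp and Eisenstein subspaces---the data one can actually tabulate---at the cost of a bound on $n$ weaker than the one conjectured at (\ref{eq:sagebound}).
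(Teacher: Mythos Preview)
Your proposal is correct and follows the same approach as the paper, which simply remarks ``Noting that $X_M N/M\leq X_{N} N$, we immediately obtain\ldots'' before stating the corollary. You have merely made explicit the monotonicity in $M\mid N$ of $s_M$, $\epsilon_M$, and $\alpha$ that the paper takes for granted, and your observation that the corollary's $s_N=\dim\mathscr{S}_k(\Gamma_0(N))$ majorises the theorem's $\sum_{d\mid N}\dim\mathscr{S}_k^{new}(\Gamma_0(d))$ via the newform decomposition is exactly the point needed to reconcile the two formulations.
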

The strategy of the proof is as follows. Section \ref{sec:reduction} is devoted to explaining how Theorem \ref{thm:main} ultimately follows from the effective upper bounds for certain partial sums of Fourier coefficients given in Theorem \ref{thm:effectiveST}). This is the heart of the proof. The link between the two theorems is that our problem is solved as soon as we find $X$ sufficiently large so that there exists a collection of integers $(n_i) \leq X$ with \begin{equation*}
    \mathrm{det}(a_{n_i}(f_j))\neq 0
\end{equation*}
for some basis $f_j$ of modular forms. We do this by showing that
\begin{equation}\label{eq:detsumfunction}
    \sum_{(n_i)\leq X}\left(\mathrm{det}(a_{n_i}(f_j))\right)^2
\end{equation}
is nonzero. But standard Tauberian techniques allow us to control such sums, as long as the $f_j$ are newforms and the $n_i$ are chosen so that only the (appropriately normalised) Fourier coefficients at primes $p_i$ contribute. This is possible to arrange by first reducing the problem to restrictions of Hecke algebras to certain subspaces of newforms. Then, upon expanding the determinant, we can identify the main term of (\ref{eq:detsumfunction}). One then need only verify that the error terms do in fact grow more slowly than the main term: this is where the Tauberian methods come in.

In subsequent sections, we generalise arguments due to Rouse--Thorner \cite{rousethorner} and prove that Conjecture \ref{conj:GRH} implies Theorem \ref{thm:effectiveST}. These arguments are quite standard Tauberian theorems, relating the asymptotic behaviour of certain Chebyshev theta functions to the distribution of the zeros of the associated $L$-functions. In fact, the core of the method goes back to Hadamard \cite{hadamard} and von Mangoldt \cite{Mangoldt}, who use it to prove an explicit version of the prime number theorem by producing an effective asymptotic for the Chebyshev theta function \begin{equation*}
    \psi(X)=\sum_{p^m \leq X}\log(p).
\end{equation*} For any $L$-function $L(s,\phi)=\sum_{n\geq 1}a_n n^{-s}$ with suitable properties (that is, possessing a functional equation and conjectured to satsify GRH) one can use these same methods to prove analogous asymptotics for the associated Chebyshev theta function. For example, Davenport's book \cite[Chapter 19]{davenport} provides an upper bound for \begin{equation*}
    \psi(\chi, X)=\sum_{p^m \leq X}\chi(p)\log(p),
\end{equation*} for $\chi$ a primitive Dirichlet character, and Rouse--Thorner \cite{rousethorner} treat the case \begin{equation*}
    \psi(\phi, X)=\sum_{p^m \leq X}a_p(\phi)\log(p),
\end{equation*} where $\phi=\mathrm{Sym}^k(f)$ for $f$ a holomorphic cusp form on $\Gamma_0(N)$ with $N$ squarefree. Now it is clear that the work of Rouse--Thorner could be adapted easily to prove similar results for other $\phi$, such as the forms \begin{equation*}
    \mathrm{Sym}^{k_1}(f_1)\otimes\dots\otimes \mathrm{Sym}^{k_n}(f_n),
\end{equation*} where the $f_1,\dots f_n$ are distinct cuspforms, but we only require $\mathrm{Sym}^2 f$ and $f_1\otimes f_2$. However, unlike in previous work, we also need to consider the shifted Chebyshev theta functions \begin{equation*}
     \psi(\phi, \ell, X)=\sum_{p^m \leq X}a_p(\phi)p^{\ell}\log(p)
\end{equation*} for certain choices of $\ell\geq 0$ with $\phi =\chi$ or a holomorphic cuspform $f$. Thus, we repeat the entire argument to deal with these cases which have not been previously covered in the literature.

At the cost of expanding the length of the argument, one could remove the assumptions that the level is squarefree and the weight is large and treat the Hecke algebras $\mathbb{T}_{k}(N,\chi)$ and $\mathbb{T}_{k}(\Gamma_1(N))$. To keep the paper from becoming too long we have not made any attempt to optimise the constants that appear in the main theorem.

Furthermore, it is clear that the results of this article generalise without too much trouble to provide effective sets of generators for Hecke algebras of modular forms over number fields, of non-integral weight, or indeed for Hecke algebras over any natural finite-dimensional space of automorphic forms. There should also be a mod $p$ analogue of these results, but we haven't investigated this.

It is possible to remove assumptions of GRH in Conjecture \ref{conj:GRH} by using unconditional results on zero-free regions, at the expense of worse lower bounds for the $X_M$. However, it should also be possible to avoid dealing with sums over Fourier coefficients of prime index at all: if one were to make fully explicit the constant and term magnitude of the error term appearing in Chandrasakharan--Narasimhan's result \cite{chandrasekharannarasimhan} \begin{equation*}
    \int_{0}^{X}\left\lvert\sum_{n\leq t}a_n(f)\right\rvert^2 dt=c_{f} X^{k+1/2}+O_{\epsilon}(X^{k+\epsilon}),
\end{equation*} (or more precisely, a suitable analogue for the sums appearing in the definitions of $F(X), G(X)$ and $H(X)$ appearing just before Proposition \ref{prop:determinant}), then one could presumably show (c.f. Proposition \ref{prop:determinant}, and using notation from Section \ref{sec:reduction}) that \begin{equation*}
\int_{[0,X]^{m}}\left\lvert\sum_{(n_{\delta})\leq (t_{\delta})}\mathrm{det}(E\mid S)(n_{\delta})\right\rvert^2 dt_1\dots dt_m>0
\end{equation*} for $X$ larger than some explicit function of all the relevant constants, without having to assume the Riemann hypothesis at all.

\section{Reduction to effective Sato--Tate theorems}\label{sec:reduction}
To begin with, we observe that the decomposition \begin{equation*}
    \mathscr{M}_{k}(\Gamma_0(N))=\bigoplus_{M\mid N}\alpha_{N/M}\bigoplus_{d\mid M}\mathscr{M}_{k}^{new}(\Gamma_0(d)),
\end{equation*} where $(\alpha_m f)(z)=f(mz)$, allows us to reduce the problem to the case of (not necessarily cuspidal) newforms. Indeed, suppose that for $X_M$ sufficiently large, we have 
\begin{equation}\label{eq:effectivenewforms}
\mathbb{T}\left(\bigoplus_{d\mid M}\mathscr{M}_{k}^{new}(\Gamma_0(d))\right)=\mathrm{Span}_{\mathbb{C}}\left\{T_p \text{ s.t. }p \nmid M, p\leq X_M\right\},
\end{equation} where the Hecke algebra on the left is the restriction of $\mathbb{T}_{k}(M)$ to the subspace indicated. Since \begin{equation*}
\mathrm{det}\begin{pmatrix}
    A & 0 \\
    C & D
\end{pmatrix}    =\mathrm{det}(A)\mathrm{det}(D)
\end{equation*} whenever $A$ and $D$ are square matrices, it follows from the duality with Fourier coefficients that \begin{equation}\label{eq:redtoXm}
    \mathbb{T}_{k}(\Gamma_0(N))=\mathrm{Span}_{\mathbb{C}}\bigcup_{M\mid N}\left\{T_{pN/M} \text{ s.t. }p \nmid M, p \leq X_{M}\right\},
\end{equation} whenever the $X_M$ are all large enough that (\ref{eq:effectivenewforms}) holds. 

Our strategy for finding an admissible lower bound for each $X_M$ is to show that when $X_M$ is sufficiently large, there exist primes $p_1,\dots,p_n \leq X_M$ not dividing $M$ such that \begin{equation}\label{eq:matrix}
\mathrm{det}(a_{p_i}(f_j))\neq 0, 
\end{equation} where the $f_j$ run over a basis of newforms for $\bigoplus_{d\mid M}\mathscr{M}_{k}^{new}(\Gamma_0(d))$. We actually show that for $X_M$ sufficiently large, the sum \begin{equation}\label{eq:determinant}
    \sum_{(p_i) \leq X_M} \mathrm{det}(a_{p_i}(f_j))^2
\end{equation}
is positive, because we can relate the asymptotic behaviour of this sum to more approachable conjectures on automorphic representations via Tauberian methods (as Serre did for the Sato--Tate conjecture). To understand the determininant at (\ref{eq:determinant}), we need to understand the Fourier coefficients of Eisenstein newforms. Thanks to our assumption concerning the weight, the Eisenstein newforms in $\mathscr{E}_{k}^{new}(\Gamma_0(d))$ are of the form \begin{equation*}
    E_{k}^{\psi,\phi}(z)=\delta_{\psi,\mathbf{1}_{1}}L(\phi,1-k)/2+\sum_{n=1}^{\infty}\sigma_{k-1}^{\psi,\phi}(n)q^n,\quad \sigma_{k-1}^{\psi,\phi}(n)=\sum_{ab=n}\psi(a)\phi(b)b^{k-1},
\end{equation*} where $\mathbf{1}_{m}$ is the principal Dirichlet character of order $m$, and $\psi$ and $\phi$ are primitive Dirichlet characters with $\psi\phi=\mathbf{1}_{d}$. We have \begin{equation}\label{eq:newformdecomp}
    \bigoplus_{d\mid M}\mathscr{M}_{k}^{new}(\Gamma_0(d))= \bigoplus_{(\psi,\phi)} \langle E_{k}^{\psi,\phi}  \rangle \oplus \bigoplus_{d\mid M} \mathscr{S}_{k}^{new}(\Gamma_{0}(d)),
\end{equation} where the first direct sum is over pairs $\psi$ and $\phi$ as above with $d$ ranging over all divisors of $M$.

From now on, let $\epsilon$ stand for the dimension of the subspace of Eisenstein newforms appearing on the right hand side of (\ref{eq:newformdecomp}), $s$ be the dimension of the space of cuspforms in (\ref{eq:newformdecomp}), and set $m=\epsilon+s$. Let $f_1,\dots f_s$ be a basis for $\bigoplus_{d\mid M} \mathscr{S}_{k}^{new}(\Gamma_{0}(d))$. For any multi-index $(p_j)$ of primes of length $m$, we define a matrix $S$, whose $(i,j)$th entry $S_{i,j}$ is $\hat{a}_{p_j}(f_i):=a_{p_j}(f_i)/p^{(k-1)/2}$. Similarly, we define a matrix $E$ whose $(i,j)$th entry is the normalised Fourier coefficient $\psi_{j}(p_i)p_i^{(1-k)/2}+\phi_{j}(p_i)p_{i}^{(k-1)/2}$ of the Eisenstein series attached to $(\psi_j,\phi_j)$. We can put $E$ and $S$ side by side to make a function that takes tuples $(p_j)$ of length $m$ to $m\times m$ block matrices $(E\mid S)(p_j)$. From now on a primed sum means that primes dividing $M$ are omitted: \begin{equation*}
    \sum_{p\leq X}^{\prime}F(p):=\sum_{\substack{p \leq X\\ p\nmid M}}F(p), 
\end{equation*}
and we define \begin{gather*}
    \pi_{M, \ell}(X)=\sum_{\substack{p\leq X}}^{\prime}p^{\ell},\quad \pi_{M}(X)=\pi_{M,0}(X),\quad
    E(X)=\pi_{M,1-k}(X)+2\pi_{M}(X).
\end{gather*} Then set \begin{gather*}
    F(X)=\max\left\{\left\lvert\sum_{\substack{p\leq X}}^{\prime}\chi(p)p^{\frac{k-1}{2}}\right\rvert \text{ s.t. }\chi \bmod M, 1<N_{\chi}\right\}\\
    G(X)=\max\left\{\left\lvert\sum_{\substack{p\leq X}}^{\prime}\hat{a}_{p}(f)\chi(p)p^{\frac{k-1}{2}}\right\rvert\text{ s.t. }\chi \bmod M, 1<N_{\chi}; f \in \bigcup_{d\mid M}\mathscr{S}_{k}^{new}(\Gamma_0(d))\right\}\\
    H(X)=\max\left\{ \left\lvert\sum_{p\leq X}^{\prime}U_2(\hat{a}_{p}(f)) \right\rvert \text{ s.t. }f \in \bigcup_{d\mid M}\mathscr{S}_{k}^{new}(\Gamma_0(d)), \qquad\qquad\qquad\qquad\qquad\right.\\
    \left.\qquad\qquad\qquad\qquad\qquad\left\lvert\sum_{p\leq X}^{\prime}\hat{a}_{p}(f)\hat{a}_{p}(g) \right\rvert \text{ s.t. }f\neq g \in \bigcup_{d\mid M}\mathscr{S}_{k}^{new}(\Gamma_0(d))\right\},
\end{gather*} where $U_2(T)=4T^2-1$ is the degree $2$ Chebyshev polynomial of the second kind. The heart of the argument is the proof of the following proposition, which consists of a lengthy computation and appears at the end of this section.
\begin{prop}\label{prop:determinant}
     Set \begin{equation*}
         Q(X)=(H(X)+\pi_{M}(X))^s-\pi_{M}(X)^s+s\log(s+3)\left(H(X)+\pi_{M}(X)\right)^{s-1}H(X).
     \end{equation*} Then \begin{align}
        &\left\lvert 4^s (m!)^{-1}\sum_{(p_{\delta})\leq X}^{\prime}\left\lvert\mathrm{det}(E\mid S)\right\rvert^2(p_{\delta})-\pi_{M}(X)^s\pi_{M,k-1}(X)^{\epsilon}\right\rvert \leq \pi_{M,k-1}(X)^{\epsilon}Q(X) \label{eq:propdetQ}\\
          &\qquad\qquad+{\epsilon}\log(\epsilon+3)(E(X)+\pi_{M,k-1}(X))^{\epsilon-1}(E(X)+F(X))Q(X)\label{eq:propdetEFQ}\\
         &\qquad\qquad+{\epsilon}\log(\epsilon+3)\pi_{M}(X)^s (E(X)+\pi_{M,k-1}(X))^{\epsilon-1}(E(X)+F(X))\label{eq:propdetEF}\\
         &\qquad\qquad+\epsilon\log(\epsilon+3)s\log(s+3)\notag \\
         &\qquad\qquad\quad\times(E(X)+\pi_{M,k-1}(X)^{\epsilon-1}(H(X)+\pi_{M}(X))^{s-1}(\pi_{M,(1-k)/2}(X)+G(X))^2\label{eq:propdetall},
     \end{align} as long as \begin{gather}
         E(X)+\pi_{M,k-1}(X)\geq 12 \epsilon (E(X)+F(X)),\label{eq:lowerboundEF}\\
         H(X)+\pi_{M}(X) \geq 12s H(X),\label{eq:lowerboundH}\\
                 (E(X)+\pi_{M,k-1}(X))(H(X)+\pi_{M}(X)) \geq 6 \epsilon s (\pi_{M,(1-k)/2}(X)+G(X))^2.\label{eq:lowerboundEHG}
     \end{gather}
\end{prop}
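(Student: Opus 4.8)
The plan is to turn the left-hand side of (\ref{eq:propdetQ}) into a single Gram determinant and then expand that determinant, tracking error terms block by block.

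\emph{Step 1 (the Gram/Andréief identity).} For a fixed tuple $(p_\delta)$ expand both factors of $\lvert\det(E\mid S)(p_\delta)\rvert^2=\det(E\mid S)\,\overline{\det(E\mid S)}$ by the Leibniz formula, over permutations $\sigma$ and $\tau$ of $[m]$. Summing over the multi-index $(p_\delta)$ the sum factors over the $m$ coordinates, and for a fixed pair $(\sigma,\tau)$ the resulting product of $m$ coordinate-sums equals $\prod_i C_{i,\rho(i)}(X)$ with $\rho=\tau\sigma^{-1}$ and $C_{ij}(X)=\sum_{p\le X}'v_i(p)\overline{v_j(p)}$, where $v_i$ denotes the normalised $p$-th coefficient of the $i$-th newform in the basis (the $\epsilon$ Eisenstein forms $E_k^{\psi,\phi}$ first, then the cuspidal $f_1,\dots,f_s$); moreover $\operatorname{sgn}(\sigma)\operatorname{sgn}(\tau)=\operatorname{sgn}(\rho)$, and the free sum over $\sigma$ contributes a factor $m!$. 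Hence $\sum_{(p_\delta)\le X}'\lvert\det(E\mid S)(p_\delta)\rvert^2=m!\,\det C(X)$ with $C(X)$ the $m\times m$ Hermitian matrix above. Absorbing $4^s$ by rescaling the $s$ cuspidal rows and the $s$ cuspidal columns of $C(X)$ by $2$, we obtain $4^s(m!)^{-1}\sum=\det\widetilde C(X)$, where $\widetilde C$ agrees with $C$ on the Eisenstein block, equals $2C$ on the two cross-blocks and $4C$ on the cuspidal block.

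\emph{Step 2 (entrywise estimates).} Write $\widetilde C(X)=D+R$ with $D=\operatorname{diag}(\pi_{M,k-1}(X),\dots,\pi_{M,k-1}(X),\pi_M(X),\dots,\pi_M(X))$ ($\epsilon$ then $s$ entries), so $R$ collects both the diagonal deviations and the off-diagonal entries. Using the Ramanujan bound on $\hat a_p(f)$ and the elementary identity $4T^2=U_2(T)+1$: the cuspidal diagonal of $\widetilde C$ is $\pi_M(X)+\sum_p'U_2(\hat a_p(f))$, so those entries of $R$ are $O(H(X))$; the cuspidal off-diagonal entries are bounded multiples of $\sum_p'\hat a_p(f)\hat a_p(g)$, again $O(H(X))$; every Eisenstein-block entry differs from its value in $D$ by $O(E(X)+F(X))$ (the $p^{1-k}$ and constant contributions go into $E(X)$, the genuine nonprincipal character sums into $F(X)$); and every Eisenstein--cuspidal cross-block entry is $O(\pi_{M,(1-k)/2}(X)+G(X))$ (its $p^{(1-k)/2}$-part is a convergent sum, its $p^{(k-1)/2}$-part is the twisted sum bounded by $G(X)$).

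\emph{Step 3 (expanding the determinant).} Write $\det\widetilde C=\det(D+R)=\sum_{S\subseteq[m]}\bigl(\prod_{i\notin S}D_{ii}\bigr)\det(R_S)$, a sum over principal submatrices; equivalently group the Leibniz expansion of $\det\widetilde C$ by the support of each permutation and by how it mixes the two blocks (cross-moves between blocks occur in pairs). The term $S=\emptyset$ is $\prod_iD_{ii}=\pi_M(X)^s\pi_{M,k-1}(X)^\epsilon$, the main term. For each remaining term bound $\det(R_S)$ by a permanent/Hadamard-type estimate that respects the three magnitude regimes of Step 2, keeping signs in check via $\lvert\prod(a+x_i)-a^n\rvert\le\prod(a+\lvert x_i\rvert)-a^n$. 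The contributions supported entirely in the cuspidal block assemble, after summing a geometric series in $sH(X)/\pi_M(X)$, into $\pi_{M,k-1}(X)^\epsilon Q(X)$, the two pieces of $Q$ being the diagonal-perturbation part $(H+\pi_M)^s-\pi_M^s$ and the off-diagonal ``derangement'' part $s\log(s+3)(H+\pi_M)^{s-1}H$; contributions touching Eisenstein indices with no cross-move give lines (\ref{eq:propdetEFQ}) and (\ref{eq:propdetEF}), one multiplied by $Q$ and one by the bare cuspidal main factor $\pi_M^s$ (the latter must also absorb the discrepancy $(E+\pi_{M,k-1})^\epsilon-\pi_{M,k-1}^\epsilon$ between the Eisenstein diagonal of $\widetilde C$ and the value recorded in $D$); and contributions using exactly one Eisenstein--cuspidal swap give the final line (\ref{eq:propdetall}) with its factor $(\pi_{M,(1-k)/2}+G)^2$. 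The hypotheses (\ref{eq:lowerboundEF})--(\ref{eq:lowerboundEHG}) are exactly the assertions that the three relevant ratios, $\epsilon(E+F)/(E+\pi_{M,k-1})$, $sH/(H+\pi_M)$ and $\epsilon s(\pi_{M,(1-k)/2}+G)^2/((E+\pi_{M,k-1})(H+\pi_M))$, are at most a small constant, so every residual geometric series converges and terms with more perturbed indices or more cross-moves cost only constant factors, which are then swallowed by the slowly growing $\log(\epsilon+3)$ and $\log(s+3)$.

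\emph{Expected main obstacle.} There is no conceptual difficulty past Step 1; the work is entirely the bookkeeping of Step 3. The Leibniz expansion of an $m\times m$ matrix with three distinct entry-magnitude regimes has a great many terms, and collating them into precisely the four displayed expressions — with constants clean enough to be covered by $\log(\epsilon+3)\log(s+3)$, and with all geometric series dominated uniformly in $\epsilon$ and $s$ under (\ref{eq:lowerboundEF})--(\ref{eq:lowerboundEHG}) — is the ``lengthy computation'' the statement advertises. A minor but easy-to-mishandle point is that the Eisenstein diagonal of $\widetilde C$ is $E(X)+\pi_{M,k-1}(X)$ rather than $\pi_{M,k-1}(X)$, so this gap must be deliberately routed into line (\ref{eq:propdetEF}) rather than left in the ``main term''.
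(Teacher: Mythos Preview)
Your approach is correct and genuinely different from the paper's. You first collapse the sum over $(p_\delta)$ via the Andr\'eief identity to a single Gram determinant $\det C$, and then expand $\det(D+R)$. The paper never writes down $C$: it applies Laplace's cofactor expansion to $\det(E\mid S)$ \emph{before} summing, squares, and splits into the ``diagonal'' piece $\alpha_1=\alpha_2$ and the ``off-diagonal'' piece $\alpha_1\neq\alpha_2$. In your language the diagonal piece is exactly $\det C_{EE}\cdot\det C_{SS}$ (Andr\'eief applied blockwise), and the off-diagonal piece is the remainder $\det C-\det C_{EE}\det C_{SS}$ coming from the cross-blocks $C_{ES},C_{SE}$; so the two organisations are equivalent. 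The paper then expands each $\lvert\det E_\alpha\rvert^2$ and $\det S_\beta^2$ by Leibniz, groups the double sum over permutations by the number of coincidences $\mathrm{Eq}(\sigma_1,\sigma_2)$, and counts via rencontres numbers, invoking the elementary estimate $\sum_{\gamma\le\ell}\ell!/\gamma!\,T^{-\gamma}\le\log(\ell+2)$ for $T\ge12\ell$; this is precisely what produces the factors $\log(\epsilon+3)$ and $\log(s+3)$ and is the exact counterpart of your ``geometric series in the three small ratios'' under (\ref{eq:lowerboundEF})--(\ref{eq:lowerboundEHG}). What your route buys is a single clean object $\det C$ to expand; what the paper's route buys is that the block separation $\alpha_1=\alpha_2$ versus $\alpha_1\neq\alpha_2$ makes the four displayed error terms fall out mechanically from the identity $\lvert xy-AB\rvert\le\lvert x-A\rvert\lvert y-B\rvert+B\lvert x-A\rvert+A\lvert y-B\rvert$ (giving (\ref{eq:propdetQ}), (\ref{eq:propdetEFQ}), (\ref{eq:propdetEF})) plus a separate sum over $\ell=\lvert\alpha_1\cap\alpha_2\rvert$ for (\ref{eq:propdetall}). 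One small thing to watch in your rescaling: multiplying the cuspidal rows and columns by $2$ puts a factor $2$ on each cross-block entry and $4$ on each off-diagonal cuspidal entry of $\widetilde C$, so your bounds there are $2(\pi_{M,(1-k)/2}+G)$ and $4H$, not $\pi_{M,(1-k)/2}+G$ and $H$; these constants must be tracked through Step~3 if you want to land on the paper's exact displayed expressions.
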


To show that the sum is nonzero for $X$ sufficiently large, we need to control the size of the error term. We already have \begin{equation*}
    E(X)\leq 0.18+2.52X/\log(X) \leq 3X/\log(X),
\end{equation*} using (\ref{eq:pi1kupper}), and it follows from Lemma \ref{lem:auxbounds} that \begin{equation*}
    \pi_{M,k-1}(X)\geq \frac{1}{4k}X^{k}/\log(X),
\end{equation*} whenever $X\geq \max\{17,1.67M\}$.
\begin{thm}\label{thm:effectiveST}
    Conjecture \ref{conj:GRH} implies that, for $X\geq 17.33$, \begin{gather*}
        F(X), G(X) \leq \alpha X^{k/2}\log(X),\qquad
        H(X) \leq \alpha X^{1/2}\log(X),
    \end{gather*} where \begin{equation*}
        \alpha = 2605+87k+(248+6k)\log(M).
    \end{equation*}
    
\end{thm}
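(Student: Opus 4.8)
The plan is to realise each of $F(X)$, $G(X)$, $H(X)$ as a partial sum over primes of the normalised eigenvalues of an $L$-function from the list $\mathscr L(M)$ --- possibly shifted in the exponent --- and then to bound such sums by the Hadamard--von Mangoldt contour method in the effective form developed by Rouse--Thorner. The dictionary is: $\chi(p)p^{(k-1)/2}$ is the $p$-th coefficient of $L(\chi,s)$ shifted in the exponent by $\ell=(k-1)/2$; $\hat a_p(f)\hat a_p(g)$ and $U_2(\hat a_p(f))$ are, in the analytic normalisation, the $p$-th coefficients of $L(f\otimes g,s)$ and $L(\mathrm{Sym}^2 f,s)$ with no shift ($\ell=0$) --- the Chebyshev polynomial of the second kind appearing precisely because it expresses the $\mathrm{Sym}^2$-character of $SU(2)$ in terms of the trace --- and $\hat a_p(f)\chi(p)p^{(k-1)/2}$ is the $p$-th coefficient of $L(f\otimes\chi,s)$ shifted by $\ell=(k-1)/2$. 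In every case $L(\phi,s)\in\mathscr L(M)$ is \emph{entire}: a nontrivial Dirichlet $L$-function, or the $L$-function of a cuspidal automorphic representation --- here one uses that $f$ is self-dual so $L(f\otimes g,s)$ has no pole when $f\ne g$, Shimura's theorem that $L(\mathrm{Sym}^2 f,s)$ is entire, and that $f\otimes\chi$ is cuspidal. Replacing the prime sum $\sum_{p\le X}a_p(\phi)p^\ell$ by its von Mangoldt-weighted analogue $\psi(\phi,\ell,X)$ costs only the prime-power tail $O_{\deg\phi}(X^{(2\ell+1)/2})$, and deleting the $\le\omega(M)\le\log_2 M$ Euler factors at $p\mid M$ costs at most $O(X^{(k-1)/2}\log M)$; both are absorbed by the claimed bounds.

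Next I would estimate $\psi(\phi,\ell,X)$ via the explicit formula. Starting from $-L'/L(\phi,s-\ell)=\sum_n\Lambda_\phi(n)n^\ell n^{-s}$, a truncated Perron formula at height $T$ together with a leftward contour shift produces: the residue of $X^s/s$ at $s=0$, a bounded constant $\tfrac{L'}{L}(\phi,-\ell)$ controlled by the functional equation and Stirling (plus an $O(\deg\phi\cdot\log X)$ term when $\ell=0$, from the coincident trivial zero); the remaining trivial-zero contributions, a rapidly convergent tail; and the nontrivial zeros $\rho$, contributing $-\sum_{|\mathrm{Im}\,\rho|\le T}X^{\rho+\ell}/(\rho+\ell)$. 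Conjecture~\ref{conj:GRH} places every $\rho$ on $\mathrm{Re}(s)=\tfrac12$, so $|X^{\rho+\ell}|=X^{1/2+\ell}$; bounding $\sum_{|\mathrm{Im}\,\rho|\le T}|\rho+\ell|^{-1}$ by $O(\log^2(\mathfrak q(\phi)T))$ through the explicit Riemann--von Mangoldt count $N(T,\phi)=\tfrac{T}{\pi}\log(\ldots)+O(\log(\mathfrak q(\phi)T))$, and choosing $T\asymp X^{1/2}$ to balance the Perron error, yields $|\psi(\phi,\ell,X)|\ll X^{1/2+\ell}\bigl(\log(\mathfrak q(\phi)X)\bigr)^2$ with explicit constants. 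Since the shifted theta functions ($\ell=(k-1)/2$) have not appeared in the literature, I would run this from scratch for them; the only structural novelty is that the starting Perron contour sits at $\mathrm{Re}(s)=1+\ell$ and that the shift couples with the $s=0$ pole and with the weight-$k$ archimedean data.

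It then remains to estimate the conductors and to undo the weighting. The finite conductor of $\chi$ divides $M$, and those of $f\otimes\chi$, $\mathrm{Sym}^2 f$, $f\otimes g$ divide bounded powers of $M$ (all the levels involved divide $M$); the archimedean factors of the weight-$k$ objects are bounded powers of $k+|t|$. Estimating the resulting Stirling and zero-counting contributions crudely --- in a few places replacing $\log(k+\cdots)$ by $k+\cdots$, since the paper makes no attempt to optimise its constants --- gives constants affine in $k$ and $\log M$, which is the source of the shape $\alpha=2605+87k+(248+6k)\log M$. Finally, partial summation against $1/\log t$ converts $|\psi(\phi,\ell,t)|\ll t^{1/2+\ell}(\log(\mathfrak q(\phi)t))^2$ into $\bigl|\sum_{p\le X}a_p(\phi)p^\ell\bigr|\ll X^{1/2+\ell}\log(\mathfrak q(\phi)X)$, one power of $\log$ being gained because $\int_2^X t^{\ell-1/2}\,dt\ll X^{\ell+1/2}$. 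Taking the maximum over the finitely many admissible $\chi$ with $1<N_\chi\mid M$ and newforms $f,g$ does not worsen the constant; folding in the prime-power and $p\mid M$ errors, and using $X\ge17.33$ to bound $\log X$ from below so that the $X$-free constants can be absorbed into $\alpha\log X$, gives $F(X),G(X)\le\alpha X^{k/2}\log X$ and $H(X)\le\alpha X^{1/2}\log X$. Passing from the asymptotic estimate to an inequality valid for every $X\ge17.33$ is a finite numerical verification once all constants are pinned down.

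The main obstacle is the bookkeeping rather than any new idea: one must carry explicit constants through three nested approximations --- the truncated Perron formula (its truncation error and the choice of $T$), the explicit zero count together with the convergence of $\sum_\rho|\rho+\ell|^{-1}$, and the Stirling/functional-equation estimates for the archimedean factors and for $\tfrac{L'}{L}(\phi,-\ell)$ --- and, most demandingly, one must rerun this entire machine for the shifted theta functions $\psi(\phi,(k-1)/2,X)$, for which there is no ready reference and for which the exponent shift interacts nontrivially with both the pole at $s=0$ and the weight-$k$ gamma factors.
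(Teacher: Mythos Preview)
Your proposal is correct and follows essentially the same route as the paper: identify each of $F$, $G$, $H$ with a (possibly shifted) prime sum attached to some $\phi\in\mathscr L(M)$, run the Hadamard--von Mangoldt explicit formula through a truncated Perron integral with $T=\sqrt X$ to bound the associated Chebyshev $\psi$-function, pass to $\theta$ and then to the unweighted sum $S(\phi,X)$ by Abel summation, and finally absorb the $p\mid M$ correction via $\omega(M)\ll\log M$. The paper carries out exactly this programme in Sections~3--6, with the shifted case handled by integrating $d\log L(\phi,s-\ell)\cdot X^s/s$ along $\mathrm{Re}(s)=\ell+\sigma_0$; your identification of the dictionary, the entireness of each $L(\phi,s)$, and the bookkeeping obstacles is accurate.
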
 To prove this theorem, we follow the methods of Rouse--Thorner \cite{rousethorner}, who proved a conditional effective version of the Sato--Tate conjecture for cuspidal newforms of squarefree level. Subsequent sections are devoted to carrying out this process.

Under the assumption of Theorem \ref{thm:effectiveST}, we prove Theorem \ref{thm:main}.
\begin{proof}[Proof of Theorem \ref{thm:main}]
    Proposition \ref{prop:determinant} implies that there exists some tuple of primes $(p_{\delta})\leq X$ with \begin{equation*}
    \mathrm{det}(E\mid S)(p_{\delta})\neq 0
\end{equation*} whenever $X$ is sufficiently large that the following upper bounds hold, together with the lower bounds (\ref{eq:lowerboundEF}), (\ref{eq:lowerboundH}) and (\ref{eq:lowerboundEHG}): \begin{align}
    \epsilon\log(\epsilon+3)\left(E(X)+F(X)\right)\left(E(X)+\pi_{M,k-1}(X)\right)^{\epsilon-1}&\leq \pi_{M,k-1}(X)^{\epsilon}/4 \label{eq: upperboundF} \\ 
    s\log(s+3) H(X)\left(H(X)+\pi_{M}(X)\right)^{s-1}&\leq\pi_{M}(X)^s/4 \label{eq:upperboundH}\\
    \left(H(X)+\pi_{M}(X)\right)^{s}-\pi_{M}(X)^{s}&\leq\pi_{M}(X)^s/4\label{eq:upperboundotherH}\\
         \epsilon\log(\epsilon+3)s\log(s+3)(E(X)+\pi_{M,k-1}(X))^{\epsilon-1}\qquad&\notag \\
         \times(H(X)+\pi_{M}(X))^{s-1}(\pi_{M,(1-k)/2}(X)+G(X))^2&\leq \pi_{M,k-1}(X)^{\epsilon}\pi_{M}(X)^s/4\label{eq:upperboundSandG} 
\end{align} Throughout, we assume that $X\geq \max\{6(k+1)^2,M, 17.33\}$ so that we may apply Lemma \ref{lem:auxbounds} and Theorem \ref{thm:effectiveST}.

Let us begin with (\ref{eq: upperboundF}). We want to prove that \begin{equation*}
    \left(\frac{E(X)}{\pi_{M,k-1}(X)}+1\right)^{\epsilon-1}\leq \frac{1}{4\epsilon\log(\epsilon+3)}\frac{\pi_{M,k-1}(X)}{E(X)+F(X)},
\end{equation*} which follows from the inequalities \begin{equation*}
    \frac{E(X)}{\pi_{M,k-1}(X)}\leq \frac{1}{\epsilon},\quad \frac{\pi_{M,k-1}(X)}{E(X)+F(X)}\geq 4e \epsilon\log(\epsilon+3). 
\end{equation*} Using the bounds from Lemma \ref{lem:auxbounds} and Theorem \ref{thm:effectiveST}, it suffices to take \begin{gather*}
    X\geq (6k \epsilon)^{1/k},\quad X^{k/2}/\log^2(X) \geq 8k e (\alpha+1) \epsilon\log(\epsilon+3).
\end{gather*}

For (\ref{eq:upperboundH}) we need to show that \begin{equation*}
    \left(\frac{H(X)}{\pi_{M}(X)}+1\right)^{s-1}\leq \frac{1}{4s \log(s+3)}\frac{\pi_{M}(X)}{H(X)},
\end{equation*} for which it suffices to show that \begin{equation*}
    \frac{H(X)}{\pi_{M}(X)}\leq \frac{1}{s},\quad \frac{\pi_{M}(X)}{H(X)} \geq 4es \log(s+3).
\end{equation*} Assuming that $X\geq \max\{6(k+1)^2,M, 17.33\}$, we apply the bounds from Lemma \ref{lem:auxbounds} and Theorem \ref{thm:effectiveST} to show that these inequalities both follow from the condition \begin{equation*}
   X^{1/2}/\log^2(X)\geq 4e\alpha s\log(s+3).
\end{equation*}

Next, we attend to (\ref{eq:upperboundotherH}). It suffices to show that \begin{equation*}
    \left(\frac{H(X)}{\pi_{M}(X)}+1\right)^s\leq \frac{5}{4},
\end{equation*} but this follows from \begin{equation*}
    \frac{H(X)}{\pi_{M}(X)}\leq\frac{2}{9s}.
\end{equation*} Employing the bounds from Lemma \ref{lem:auxbounds} and Theorem \ref{thm:effectiveST}, we see that it suffices to demand \begin{equation*}
    X^{1/2}/\log^2(X)\geq 4.5\alpha s.
\end{equation*}

We turn to (\ref{eq:upperboundSandG}). We need to show that \begin{equation*}
    \left(\frac{E(X)}{\pi_{M,k-1}(X)}+1\right)^{\epsilon-1}\left(\frac{H(X)}{\pi_{M}(X)}+1\right)^{s-1}\frac{(0.85+G(X))^2}{\pi_{M,k-1}(X)}\leq \frac{\pi_{M}(X)}{4\epsilon s \log(\epsilon+3)\log(s+3)},
\end{equation*} for which it suffices to show that \begin{gather*}
    \frac{E(X)}{\pi_{M,k-1}(X)}\leq \frac{1}{\epsilon},\quad \frac{H(X)}{\pi_{M}(X)}\leq \frac{1}{s},\quad \pi_{M}(X)\geq 4e^2 \epsilon s\log(\epsilon+3)\log(s+3) \frac{(0.85+G(X))^2}{\pi_{M,k-1}(X)}.
\end{gather*} We apply the bounds from Lemma \ref{lem:auxbounds} and Theorem \ref{thm:effectiveST} to show that these inequalities follow from \begin{equation*}
    X^{k/2}/\log^2(X)\geq 6k \epsilon,\quad X^{1/2}/\log^2(X)\geq \alpha s,\quad X/\log^4(X)\geq 8ke^2 (\alpha+1)^2\epsilon s \log(\epsilon+3)\log(s+3).
\end{equation*}

Finally we work out conditions on $X$ such that the lower bounds (\ref{eq:lowerboundEF}), (\ref{eq:lowerboundH}) and (\ref{eq:lowerboundEHG}) hold. The bound (\ref{eq:lowerboundH}) is easy: it follows from \begin{equation}\label{eq:12sa}
    X^{1/2}/\log^2(X)\geq 12s+\alpha.
\end{equation} For (\ref{eq:lowerboundEF}), it suffices to show that \begin{equation*}
    X^{k/2}/\log^2(X)\geq 6kX^{1-k/2}/\log^2(X)+24k \epsilon\left(3X^{1-k/2}\log^2(X)+\alpha\right).
\end{equation*} But since $X\geq k^2$, one can check that this follows from \begin{equation*}
    X^{k/2}/\log^2(X)\geq 24k \epsilon (\alpha+1) +1.
\end{equation*} Lastly, the bound (\ref{eq:lowerboundEHG}) follows from \begin{equation*}
    \left(\frac{X^{1/2}}{2k\log^2(X)}-\frac{3X^{1/2-k}}{\log^2(X)}\right)\left(\frac{X^{1/2}}{\log^2(X)}-\alpha\right)\geq 6 \epsilon s (\alpha+0.85)^2.
\end{equation*} But using (\ref{eq:12sa}), it suffices to prove that \begin{equation*}
    \left(\frac{X^{1/2}}{2k\log(X)}-\frac{3X^{1/2-k}}{\log^2(X)}\right)\geq \frac{1}{2}\epsilon (\alpha+0.85)^2.
\end{equation*} This follows from the assumption \begin{equation*}
    X^{1/2}/\log^2(X)\geq \epsilon k (\alpha+1)^2+1.
\end{equation*}

So upon comparing all the lower bounds for $X$ that we accumulated along the way, we have proved that under Conjecture \ref{conj:GRH}, (\ref{eq:effectivenewforms}) holds whenever \begin{align*}
    X_{M}&\geq \max\{6(k+1)^2,M,17.33\},\\
    \frac{X_{M}}{\log^4(X_M)}&\geq 60 k (\alpha+1)^2 (\epsilon+1)(s+1)\log(\epsilon+3)\log(s+3),\\
    \frac{X_{M}^{1/2}}{\log^2(X)}&\geq \max\{12(\alpha+1)s\log(s+3),k\epsilon (\alpha+1)^2+1\},\\
    \frac{X_{M}^{k/2}}{\log^2(X_M)}&\geq 24k (\alpha+1)(\epsilon+1)\log(\epsilon+3).
\end{align*} Theorem \ref{thm:main} then follows from (\ref{eq:redtoXm}).
\end{proof}
\begin{proof}[Proof of Proposition \ref{prop:determinant}]
    Recall that we have defined functions $E$, $S$ and $(E\mid S)$ that take $\epsilon$-tuples of primes, $s$-tuples of primes and $m$-tuples of primes to $m\times \epsilon$, $m\times s$ and $m\times m$ matrices respectively. Let $\delta=\{1,\dots,m\}$, $u=\{1,\dots, \epsilon\}$ and $v=\{1,\dots,s\}$. For $\alpha \subseteq \delta$ of length $\epsilon$, we let $E_{\alpha}$ denote the $\epsilon\times \epsilon$ minor of $E$ formed by keeping only the $\alpha_1$st,..., $\alpha_{\epsilon}$st rows, and similarly for $S_{\beta}$.

    Now Laplace's generalised row expansion implies that \begin{equation*}
        \mathrm{det}(E\mid S)=\sum_{\substack{\alpha \sqcup \beta=\delta \\ |\alpha|=\epsilon}}*\mathrm{det}(E_\alpha)\mathrm{det}(S_{\beta}),
    \end{equation*} where $\sqcup$ reminds us that $\alpha$ and $\beta$ are disjoint, and $*$ is an unimportant sign. Suppose $\alpha_i\sqcup \beta_i=\delta$ for $i=1,2$, and let $\sigma_i: \alpha_i\rightarrow u$ and $\tau_i: \beta_i\rightarrow v$ denote bijections. Then \begin{align}
        &\mathrm{det}(E_{\alpha_1})\overline{\mathrm{det}(E_{\alpha_2})}\mathrm{det}(S_{\beta_1})\mathrm{det}(S_{\beta_2})\\
        &=\sum_{\substack{\sigma_1: \alpha_1\rightarrow u\\ \sigma_2: \alpha_2\rightarrow u}}\sum_{\substack{\tau_1: \beta_1\rightarrow u\\ \sigma_2: \beta_2\rightarrow u}} *\left(\prod_{\substack{a \in \alpha_1 \setminus \alpha_2}}E_{a,\sigma_1(a)}S_{a,\tau_2(a)}\prod_{a\in \alpha_2\setminus \alpha_1}\overline{E_{a,\sigma_2(a)}}S_{a,\tau_1(a)}\right.\notag\\
        &\left.\times\prod_{\substack{a \in \alpha_1\cap \alpha_2 \\ \sigma_1(a)=\sigma_2(a)}}\left\lvert E_{a,\sigma_1(a)}\right\rvert^2\prod_{\substack{a \in \alpha_1\cap \alpha_2 \\ \sigma_1(a)\neq \sigma_2(a)}}E_{a,\sigma_1(a)}\overline{E_{a,\sigma_2(a)}} \prod_{\substack{b\in \beta_1\cap \beta_2\\ \tau_1(b)=\tau_2(b)}}S_{b,\tau_1(b)}^2\prod_{\substack{b\in \beta_1\cap \beta_2\\ \tau_1(b)\neq \tau_2(b)}}S_{b,\tau_1(b)}S_{b,\tau_2(b)}\right),\label{eq:detexpansion}
    \end{align} where we have used the fact that \begin{gather*}
        \alpha_1\setminus \alpha_2= \beta_2\setminus \alpha_1,\quad \alpha_2\setminus \alpha_1= \beta_1\setminus \beta_2,\\
        \delta=(\alpha_1 \setminus \alpha_2) \sqcup (\alpha_1\cap \alpha_2) \sqcup (\beta_1 \cap \beta_2) \sqcup (\alpha_2\setminus \alpha_1).
    \end{gather*}
    Now, we have \begin{equation*}
        \left\lvert\mathrm{det}(E_\alpha)\right\rvert^2=\sum_{\sigma: \alpha\rightarrow u}\prod_{a \in \alpha}\left\lvert E_{a,\sigma(a)}\right\rvert^2+\sum_{\substack{\sigma_i: \alpha\rightarrow u\\ \sigma_1\neq \sigma_2}}\prod_{\substack{a \in \alpha \\ \sigma_1(a)=\sigma_2(a)}}\left\lvert E_{a,\sigma_1(a)}\right\rvert^2\prod_{\substack{a \in \alpha \\ \sigma_1(a)\neq \sigma_2(a)}}E_{a,\sigma_1(a)}\overline{E_{a,\sigma_2(a)}},
    \end{equation*} and we know that \begin{gather*}
        \sum_{p_a\leq X}^{\prime}\left\lvert E_{a,c}\right\rvert^2(p_a)\leq E(X)+\pi_{M,k-1}(X)\\
         \left\lvert\sum_{p_a\leq X}^{\prime} E_{a,c_1}(p_a)\overline{E_{a,c_2}(p_a)}\right\rvert\leq E(X)+F(X)
    \end{gather*} It follows that \begin{align}
        &\left\lvert\sum_{(p_{\alpha})\leq X}^{\prime}\left\lvert\mathrm{det}(E_\alpha)\right\rvert^2(p_{\alpha})-\epsilon!\pi_{M,k-1}(X)^{\epsilon}\right\rvert \notag\\
        &\leq \sum_{\substack{\sigma_i: \alpha \rightarrow u\\ \sigma_1 \neq \sigma_2}}\left(E(X)+\pi_{M,k-1}(X)\right)^{\mathrm{Eq}(\sigma_1,\sigma_2)}\left(E(X)+F(X)\right)^{\epsilon-\mathrm{Eq}(\sigma_1,\sigma_2)},\label{eq:EqsumE}
    \end{align} where, for two sets $D_1, D_2$ with $|D_i|=d$ and $|D_1\cap D_2| = \ell$, and bijections $\rho_i: D_i\rightarrow C$, we define \begin{equation*}
        \mathrm{Eq}(\rho_1,\rho_2)=\#\{x \in D_1\cap D_2 \text{ s.t. }\rho_1(x)=\rho_2(x)\}.
    \end{equation*} But some combinatorics reveals that, for $0\leq \gamma \leq \ell$, \begin{equation*}
        \#\{\rho_1,\rho_2 \text{ s.t. } \mathrm{Eq}(\rho_1,\rho_2)=\gamma\}=d!(d-\ell)!F(\ell,\gamma),
    \end{equation*} where the \begin{equation*}
        F(\ell,\gamma)=\binom{\ell}{\gamma}\left\lfloor\frac{(l-\gamma)!}{e}\right\rceil\leq \frac{\ell!}{\gamma!}
    \end{equation*} are the \emph{rencontres numbers}, counting the number of $\rho \in S_{\ell}$ with $\gamma$ fixed points. Thus, the right hand side of (\ref{eq:EqsumE}) is equal to \begin{equation}\label{eq:eisrencontres}
        {\epsilon}!\sum_{\gamma=0}^{\epsilon-1}F(\epsilon,\gamma)\left(E(X)+\pi_{M,k-1}(X)\right)^{\gamma}\left(E(X)+F(X)\right)^{\epsilon-\gamma}.
    \end{equation} For every $\ell\geq 1$ and $T\geq 12\ell$, we have the estimate \begin{equation}\label{eq:Recontressumestimate}
        \ell!\sum_{\gamma=0}^{\ell} T^{\ell}/\gamma! \leq \log(\ell+2)T^{\ell},
    \end{equation} which we apply to (\ref{eq:eisrencontres}) to produce, for $E(X)+\pi_{M,k-1}(X)\geq 12\epsilon(E(X)+F(X))$, \begin{align*}
        &\left\lvert(\epsilon! )^{-1}\sum_{(p_{\alpha})\leq X}^{\prime}\left\lvert\mathrm{det}(E_\alpha)\right\rvert^2(p_{\alpha})-\pi_{M,k-1}(X)^{\epsilon}\right\rvert \leq {\epsilon}\log(\epsilon+3)\left(E(X)+\pi_{M,k-1}(X)\right)^{\epsilon-1}(E(X)+F(X)).
    \end{align*} Similarly, for the determinants of the square minors of the matrix $S$ we have 
    \begin{equation*}
        \mathrm{det}(S_\beta)^2=\sum_{\tau:\beta\rightarrow v}\prod_{b\in \beta}S_{b,\tau(b)}^2+\sum_{\substack{\tau_i: \beta\rightarrow v\\ \tau_1\neq \tau_2}}\prod_{\substack{b\in \beta\\ \tau_1(b)=\tau_2(b)}}S_{b,\tau_1(b)}^2\prod_{\substack{b\in \beta\\ \tau_1(b)\neq \tau_2(b)}}S_{b,\tau_1(b)}S_{b,\tau_2(b)},
    \end{equation*} and we note that \begin{equation*}
        S_{b,\tau(b)}^2=\frac{1}{4}\left(U_2(S_{b,\tau(b)})+1\right),
    \end{equation*} so \begin{equation*}
        \sum_{\tau:\beta\rightarrow v}\prod_{b\in \beta}S_{b,\tau(b)}^2=4^{-s}s!+4^{-s}s!\sum_{\tau:\beta\rightarrow v}\sum_{\substack{T\subseteq \beta \\ |T|\geq 1}}U_2(S_{b_1,\tau(b_1)})\times \dots \times U_2(S_{b_{|T|},\tau(b_{|T|})}).
    \end{equation*} Then, since we know that \begin{equation*}
        \left\lvert\sum^{\prime}_{p_{b}\leq X} U_2(S_{b,c})(p_b)\right\rvert \leq H(X),
    \end{equation*} it follows upon using the facts about rencontres numbers again, together with (\ref{eq:Recontressumestimate}), that for $H(X)+\pi_{M}(X) \geq 12s H(X)$, we have \begin{align*}
        &\left\lvert\sum_{(p_{\beta})\leq X}^{\prime}\mathrm{det}(S_{\beta})^2(p_{\beta})-4^{-s}s!\pi_{M}(X)^s\right\rvert\\
        &\leq 4^{-s} \left(s!(H(X)+\pi_{M}(X))^s-s!\pi_{M}(X)^s+\sum_{\substack{\tau_i: \beta \rightarrow v\\ \tau_1 \neq \tau_2}}\left(H(X)+\pi_{M}(X)\right)^{\mathrm{Eq}(\tau_1,\tau_2)}H(X)^{s-\mathrm{Eq}(\tau_1,\tau_2)}\right)\\
        &\leq 4^{-s}s!\left((H(X)+\pi_{M}(X))^s-\pi_{M}(X)^s+s\log(s+3)\left(H(X)+\pi_{M}(X)\right)^{s-1}H(X)\right).
    \end{align*} The identity \begin{equation*}
        |xy-AB|\leq |x-A||y-B|+B|x-A|+A|y-B|,
    \end{equation*} along with the fact that the number of choices of $\alpha$ and $\beta$ such that $\alpha\sqcup\beta=\delta$ is $\binom{m}{s,\epsilon}$, then allows us to conclude that the quantity \begin{equation*}
        \left\lvert 4^{s}(m!)^{-1}\sum_{\alpha\sqcup\beta=\delta}\sum_{(p_{\delta}\leq X}^{\prime}\left\lvert\mathrm{det}(E_{\alpha})\right\rvert^2\mathrm{det}(S_{\beta})^2 (p_{\delta})-\pi_{M}(X)^s\pi_{M,k-1}(X)^{\epsilon}\right\rvert
    \end{equation*} is bounded by the expressions appearing at (\ref{eq:propdetEFQ}), (\ref{eq:propdetEF}), and the right hand side of (\ref{eq:propdetQ}).

    It remains to give an upper bound for \begin{equation*}
         \left\lvert\sum_{(p_{\delta})\leq X}^{\prime}\mathrm{det}(E_{\alpha_1})\overline{\mathrm{det}(E_{\alpha_2})}\mathrm{det}(S_{\beta_1})\mathrm{det}(S_{\beta_2})\right\rvert,
    \end{equation*} where we are now assuming that $\alpha_1\neq \alpha_2$ (so that $\beta_1\neq \beta_2$ too). We know that \begin{gather*}
        \left\lvert\sum_{p_a \leq X}^{\prime}E_{a,c}S_{a,c}\right\rvert \leq \pi_{M,(1-k)/2}(X)+G(X),\\
        \left\lvert\sum_{p_b\leq X}^{\prime} U_1(S_{b,c_1})U_1(S_{b,c_2})(p_b)\right\rvert \leq H(X),
    \end{gather*} where $U_1(T)=2T$ is the first degree Chebyshev polynomial of the second kind, so returning to (\ref{eq:detexpansion}), we find that \begin{align}
        &\left\lvert\sum_{(p_{\delta})\leq X}^{\prime}\mathrm{det}(E_{\alpha_1})\overline{\mathrm{det}(E_{\alpha_2})}\mathrm{det}(S_{\beta_1})\mathrm{det}(S_{\beta_2})\right\rvert \notag \\
        &\leq 4^{-s}\sum_{\sigma_i: \alpha_i\rightarrow u}\sum_{\tau_i: \beta_i\rightarrow v} \left(\pi_{M,(1-k)/2}+G(X)\right)^{|\alpha_1\setminus\alpha_2|+|\alpha_2\setminus \alpha_1|}\left(E(X)+\pi_{M,k-1}(X)\right)^{\mathrm{Eq}(\sigma_1,\sigma_2)}\notag \\
        &\qquad\qquad \times\left(E(X)+F(X)\right)^{\epsilon-\mathrm{Eq}(\sigma_1,\sigma_2)}\left(H(X)+\pi_M(X)\right)^{\mathrm{Eq}(\tau_1,\tau_2)}H(X)^{s-\mathrm{Eq}(\tau_1,\tau_2)}.\label{eq:sumdetES}
    \end{align} Let $\ell=|\alpha_1\cap \alpha_2|$. Then $|\alpha_1\setminus\alpha_2|+|\alpha_2\setminus \alpha_1|=2\epsilon-2\ell$. Using the facts about rencontres numbers again, we find that the right hand side at (\ref{eq:sumdetES}) is equal to \begin{align*}
        & 4^{-s}\epsilon! s! (\epsilon-\ell)!^2\left(\pi_{M,(1-k)/2}(X)+G(X)\right)^{2\epsilon-2\ell}\\
        &\qquad\times\sum_{\gamma=0}^{\ell}F(\ell,\gamma)\left(E(X)+\pi_{M,k-1}(X)\right)^{\gamma}\left(E(X)+F(X)\right)^{\ell-\gamma}\\
        &\qquad\times\sum_{k=0}^{s-\epsilon+\ell}F(s-\epsilon+\ell,k)\left(H(X)+\pi_{M}(X)\right)^k H(X)^{s-\epsilon+\ell-k}.
    \end{align*} By (\ref{eq:Recontressumestimate}), this expression is bounded above by \begin{align*}
        4^{-s}\epsilon!s!(\epsilon-\ell)!\log(\epsilon+3)\log(s+3)&\left(H(X)+\pi_{M}(X)\right)^s\\
        &\times (E(X)+\pi_{M,k-1})^{\ell}\left(\frac{(\pi_{M,(1-k)/2}(X)+G(X))^2}{H(X)+\pi_{M}(X)}\right)^{\epsilon-\ell}
    \end{align*} as long as \begin{gather*}
        E(X)+\pi_{M,k-1}(X)\geq 12\epsilon\left(E(X)+F(X)\right),\quad H(X)+\pi_{M}(X) \geq 12sH(X).
    \end{gather*}
    Now, \begin{equation*}
        \#\{\alpha_1,\alpha_2 \subseteq \delta \text{ s.t. } |\alpha_1|=|\alpha_2|=\epsilon, |\alpha_1\cap \alpha_2|=\ell\}=\binom{s+\epsilon}{\epsilon}\binom{\epsilon}{\ell}\binom{s}{\epsilon-\ell},
    \end{equation*} so we have \begin{align}
        &\left\lvert 4^{s}(m!)^{-1}\sum_{\substack{\alpha_i\sqcup \beta_i=\delta\\ \alpha_1\neq \alpha_2}}*\sum_{(p_{\delta})\leq X}^{\prime}\mathrm{det}(E_{\alpha_1})\overline{\mathrm{det}(E_{\alpha_2})}\mathrm{det}(S_{\beta_1})\mathrm{det}(S_{\beta_2})\right\rvert \leq\log(\epsilon+3)\log(s+3)(H(X)+\pi_{M}(X))^s\notag\\
        &\times\sum_{\ell=\max\{0,\epsilon-s\}}^{\epsilon-1}\frac{\epsilon!s!}{\ell! (s-\epsilon+\ell)!}(E(X)+\pi_{M,k-1})^{\ell}\left(\frac{(\pi_{M,(1-k)/2}(X)+G(X))^2}{H(X)+\pi_{M}(X)}\right)^{\epsilon-\ell}.\label{eq:ellsum}
    \end{align} But for $m\geq 1$, $k\geq 0$ and $T\geq 6m(m+k)$, we have the estimate \begin{equation*}
        m!(m+k)!\sum_{\ell=0}^{m}\frac{T^{\ell}}{\ell! (k+\ell)!}\leq \log(m+2)\log(m+k+2)T^m,
    \end{equation*} which tells us that the sum at (\ref{eq:ellsum}) is bounded by \begin{equation*}
        \log(\epsilon+3)\log(s+3)(E(X)+\pi_{M,k-1})^{\epsilon-1}\frac{(\pi_{M,(1-k)/2}(X)+G(X))^2}{H(X)+\pi_{M}(X)}
    \end{equation*} as long as \begin{equation*}
        (E(X)+\pi_{M,k-1}(X))(H(X)+\pi_{M}(X)) \geq 6 \epsilon s (\pi_{M,(1-k)/2}(X)+G(X))^2.
    \end{equation*} This accounts for the error term at (\ref{eq:propdetall}).
\end{proof}

\section{The density of nontrivial zeros of $L$-functions}
Recall the set $\mathscr{L}(M)$ from the introduction. For each $\phi \in \mathscr{L}(M)$, we define the following quantities \begin{center}
\begin{tabular}{ c c c c c}
 $\phi$ & $Q(\phi,s)$ & $\Gamma(\phi,s)$ & $\mu_{\phi}$ & $q_{\phi}$\\
 \hline
 $\mathrm{Sym}^2f$ & $N_{f}^s$ & $\Gamma_{\mathbb{R}}(s+1)\Gamma_{\mathbb{C}}(s+k-1)$ & $3$ & $\log (N_f)$\\  
 $f\otimes g$ & $N_{f}^{s}N_{g}^{s}$ & $\Gamma_{\mathbb{C}(s)}\Gamma_{\mathbb{C}}(s+k-1)$  & $4$ & $\log(N_f N_g)$ \\
 $\chi$ & $N_{\chi}^{(s+\delta_{\chi})/2}$ & $\Gamma_{\mathbb{R}}(s+\delta_{\chi})$ & $1$ & $
 \frac{1}{2}\log(N_{\chi})$ \\
 $f\otimes \chi$ & $N_{f\otimes \chi}^{s/2}$ & $\Gamma_{\mathbb{C}}(s+(k-1)/2)$ & $2$ & $\frac{1}{2}\log(N_{f\otimes \chi})$
\end{tabular}
\end{center} where \begin{equation*}
    \Gamma_{\mathbb{R}}(s)=\pi^{-s/2}\Gamma(s/2),\quad\Gamma_{\mathbb{C}}(s)=2(2\pi)^{-s}\Gamma(s),
\end{equation*} so that we may define the completed $L$-function \begin{equation*}
    \Lambda(\phi,s):=Q(\phi,s)\Gamma(\phi,s)L(\phi,s).
\end{equation*}
It satisfies the functional equation
\begin{equation*}
    \Lambda(\phi,s)=\epsilon(\phi)\Lambda(\phi,1-s),
\end{equation*} for some complex number $\epsilon(\phi)$ of absolute value $1$. Note that $q_{\phi}$ is simply the constant value of $d\log Q(\phi,s)$.
\begin{lem}\label{lem:vonMangoldt}
    For $s=\sigma+it$ with $\sigma>1$, \begin{equation}
    \lvert d\log L(\phi,s)\rvert \leq 4\lvert \zeta^{\prime}(\sigma)\rvert/\zeta(\sigma).
\end{equation}
\end{lem}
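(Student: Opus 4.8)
The plan is to compare the Dirichlet series for $d\log L(\phi,s)$ with that for $d\log\zeta(s)$ coefficient by coefficient, using that each $L(\phi,s)$ is an Euler product of degree $\mu_\phi\le 4$ whose local roots all have absolute value at most $1$.

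First I would read off, from the Euler products tabulated in the introduction, a presentation $L(\phi,s)=\prod_p\prod_{i=1}^{\mu_\phi}\bigl(1-\beta_{\phi,i}(p)p^{-s}\bigr)^{-1}$: for $\mathrm{Sym}^2 f$ the $\beta_{\phi,i}(p)$ are $\alpha_{f,1}(p)^2$, $\alpha_{f,1}(p)\alpha_{f,2}(p)$, $\alpha_{f,2}(p)^2$; for $f\otimes g$ the four products $\alpha_{f,i}(p)\alpha_{g,j}(p)$; for $\chi$ the single value $\chi(p)$; and for $f\otimes\chi$ the pair $\chi(p)\alpha_{f,i}(p)$ — with the convention that a local parameter is set to $0$ when the corresponding Euler factor degenerates at a ramified prime, so there are always at most $\mu_\phi$ of them. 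The next point to record is that $\lvert\beta_{\phi,i}(p)\rvert\le 1$ for every prime $p$: for $p$ coprime to the relevant levels and conductors this is immediate from the Ramanujan bound $\lvert\alpha_{f,i}(p)\rvert=\lvert\chi(p)\rvert=1$ already quoted, and for the finitely many ramified primes it follows from the local Ramanujan bound for the constituents (alternatively, the bad Euler factors of $\mathrm{Sym}^2 f$, $f\otimes g$, $f\otimes\chi$ and $\chi$ can be written down and checked directly to have roots of absolute value $\le 1$).

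Then I would take logarithmic derivatives. For $\sigma>1$ all the Euler products converge absolutely, so
\begin{equation*}
-\,d\log L(\phi,s)=\sum_{n\ge 1}\frac{\Lambda_\phi(n)}{n^{s}},\qquad \Lambda_\phi(p^m)=\Bigl(\sum_{i=1}^{\mu_\phi}\beta_{\phi,i}(p)^{m}\Bigr)\log p,
\end{equation*}
with $\Lambda_\phi(n)=0$ unless $n$ is a prime power, in exact parallel with $-d\log\zeta(s)=\sum_n\Lambda(n)n^{-s}$. The bound $\lvert\beta_{\phi,i}(p)\rvert\le 1$ gives $\lvert\Lambda_\phi(p^m)\rvert\le\mu_\phi\log p=\mu_\phi\Lambda(p^m)$, hence $\lvert\Lambda_\phi(n)\rvert\le\mu_\phi\Lambda(n)$ for all $n$, and therefore
\begin{equation*}
\bigl\lvert d\log L(\phi,s)\bigr\rvert\le\sum_{n\ge 1}\frac{\lvert\Lambda_\phi(n)\rvert}{n^{\sigma}}\le\mu_\phi\sum_{n\ge 1}\frac{\Lambda(n)}{n^{\sigma}}=\mu_\phi\left(-\frac{\zeta'(\sigma)}{\zeta(\sigma)}\right)=\mu_\phi\left\lvert\frac{\zeta'(\sigma)}{\zeta(\sigma)}\right\rvert,
\end{equation*}
the last equality because $-\zeta'(\sigma)/\zeta(\sigma)=\sum_n\Lambda(n)n^{-\sigma}>0$ for real $\sigma>1$. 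Since the table shows $\mu_\phi\in\{1,2,3,4\}$, in every case $\mu_\phi\le 4$, and the lemma follows.

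The only ingredient that is not purely formal is the uniform bound $\lvert\beta_{\phi,i}(p)\rvert\le 1$ at the ramified primes; for the good primes it is exactly the Ramanujan–Petersson bound already invoked, and at bad primes it is a standard (if slightly tedious) local computation. I expect that to be the main — indeed essentially the sole — obstacle.
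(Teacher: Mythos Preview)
Your proof is correct and follows essentially the same approach as the paper: both arguments write $-d\log L(\phi,s)=\sum_n A_\phi(n)n^{-s}$, bound $\lvert A_\phi(n)\rvert\le\mu_\phi\Lambda(n)$ via the Ramanujan bound on the local roots, and invoke $\mu_\phi\le 4$. You have simply spelled out the details (and flagged the ramified-prime check) that the paper leaves implicit in its one-line proof.
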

\begin{proof}
    It suffices to observe that for $\lvert A_{\phi}(j) \rvert \leq \mu_{\phi} \Lambda(j)$, where $\Lambda(j)$ is the von Mangoldt function and \begin{equation*}
    d\log L(\phi,s)=\sum_{j=1}^{\infty}A_{\phi}(j)j^{-s}.\qedhere
    \end{equation*}
\end{proof}
\begin{lem}\label{lem:gammabound}
    For all $s=\sigma+iT\in \mathbb{C}$ with $T\geq 1$, and all $\phi \in \mathscr{L}(M)$, \begin{equation}\label{eq:gammabound}
        \lvert d\log \Gamma(\phi,s) \rvert\leq 13.18+4\log(|s|+k+2).
    \end{equation}
\end{lem}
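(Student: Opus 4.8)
The plan is to reduce the whole estimate to a single bound on the digamma function $\psi=\Gamma'/\Gamma$. Unwinding the definitions of $\Gamma_{\mathbb R}$ and $\Gamma_{\mathbb C}$ gives $d\log\Gamma_{\mathbb R}(s+c)=\tfrac12\psi\bigl(\tfrac{s+c}{2}\bigr)-\tfrac12\log\pi$ and $d\log\Gamma_{\mathbb C}(s+c)=\psi(s+c)-\log(2\pi)$, so for each of the four $\phi$ in the table $d\log\Gamma(\phi,s)$ is a sum of at most two such terms, with total logarithmic weight $\mu_\phi/2\le2$ on the $\psi$'s. The one feature of the hypothesis that matters is that every argument of $\psi$ occurring here has imaginary part equal to $T$ (for the $\Gamma_{\mathbb C}$ factors) or $T/2$ (for the $\Gamma_{\mathbb R}$ factors), hence $\ge\tfrac12$; so although $\mathrm{Re}(s)$ is unrestricted and may be very negative, these arguments stay a definite distance from the poles of $\psi$ on the nonpositive real axis.

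The technical heart is therefore the estimate: \emph{if $|\mathrm{Im}(w)|\ge\tfrac12$ then $|\psi(w)|\le\log(|w|+3)+c_0$} for an explicit $c_0$ (my computation gives $c_0$ slightly above $6$). For $\mathrm{Re}(w)\ge\tfrac12$ this follows from Binet's second formula
\[
\psi(w)=\log w-\frac{1}{2w}-2\int_0^{\infty}\frac{t\,dt}{(t^2+w^2)(e^{2\pi t}-1)},
\]
using $|t^2+w^2|\ge|\mathrm{Im}(w^2)|=2\,\mathrm{Re}(w)\,|\mathrm{Im}(w)|\ge\tfrac12$, the value $\int_0^{\infty}t\,(e^{2\pi t}-1)^{-1}\,dt=\tfrac1{24}$, the trivial $|1/(2w)|\le1$, and $|\log w|\le\log(|w|+2)+\tfrac\pi2$ (valid since $\mathrm{Re}(w)>0$ and $|w|\ge\tfrac12$). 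For $\mathrm{Re}(w)<\tfrac12$ I would apply the reflection formula $\psi(w)=\psi(1-w)-\pi\cot(\pi w)$: now $\mathrm{Re}(1-w)>\tfrac12$ and $|\mathrm{Im}(1-w)|\ge\tfrac12$, so the previous bound applies to $\psi(1-w)$ (with $|1-w|\le1+|w|$), while $|\pi\cot(\pi w)|\le\pi\coth(\pi|\mathrm{Im}(w)|)\le\pi\coth(\tfrac\pi2)$.

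It then remains to substitute. Each shifted argument $w$ that appears satisfies $|w|+3\le|s|+k+2$: for the $\Gamma_{\mathbb C}$ terms the shift $c\in\{0,k-1,(k-1)/2\}$ gives this directly, and for the $\Gamma_{\mathbb R}$ terms $|w|=|s+c|/2\le(|s|+1)/2$, so $|w|+3\le(|s|+7)/2\le|s|+k+2$. Hence $|d\log\Gamma(\phi,s)|$ is bounded by $(\mu_\phi/2)\log(|s|+k+2)$ plus a fixed constant built from $c_0$, $\log\pi$ and $\log(2\pi)$; since $\mu_\phi/2\le2\le4$ and $|s|+k+2\ge5$, the surplus $(4-\mu_\phi/2)\log(|s|+k+2)\ge2\log5$ is enough to force the stated bound $13.18+4\log(|s|+k+2)$. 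The tightest case is $\phi=f\otimes g$, where $d\log\Gamma(\phi,s)=\psi(s)+\psi(s+k-1)-2\log(2\pi)$.

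The only real obstacle is bookkeeping: the reflection step is lossy (the term $\pi\coth(\tfrac\pi2)\approx3.4$ dominates $c_0$), so one must confirm that the gap between the coefficient $4$ in the target and the true coefficient $\mu_\phi/2\le2$ of the logarithm, together with $|s|+k+2\ge5$, really does swallow $2c_0$ plus the $\Gamma_{\mathbb R}/\Gamma_{\mathbb C}$ constants; this holds comfortably in every case and most narrowly for $f\otimes g$. The one conceptual point, easy to overlook, is that $\mathrm{Re}(s)$ can be arbitrarily negative — which is precisely why a naive Stirling bound in a fixed sector $|\arg w|\le\pi-\delta$ is unavailable and the reflection formula must be used.
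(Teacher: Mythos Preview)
Your argument is correct and self-contained. The paper's own proof is much terser: it simply quotes the ready-made estimate
\[
|d\log\Gamma(s)|\le 4+\frac{2}{\langle s\rangle}+2\log(|s|+3),\qquad \langle s\rangle:=\min_{j\ge0}|s+j|,
\]
from Rouse--Thorner \cite[Lemma~5.2]{rousethorner}, notes that $T\ge1$ keeps $\langle\,\cdot\,\rangle$ bounded below for every shifted argument that occurs in the table, and then checks the four cases one at a time. In effect you have reproved (a variant of) that Rouse--Thorner lemma from scratch via Binet's integral plus reflection. The conceptual point you highlight --- that $\sigma$ is unrestricted, so the poles of $\psi$ on the nonpositive real axis must be handled --- is exactly what both the $\langle s\rangle$ device and your reflection step are for.

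One genuine difference worth noting: your digamma bound carries the log with coefficient~$1$, whereas the cited Rouse--Thorner inequality carries it with coefficient~$2$. Consequently your assembled bound for $|d\log\Gamma(\phi,s)|$ has log-coefficient $\mu_\phi/2\le2$, which leaves a surplus of at least $2\log(|s|+k+2)$ to absorb your somewhat larger additive constant $c_0$; this is precisely what makes your final numerical check go through in the tightest case $\phi=f\otimes g$. (That check is close: with $c_0\approx6.2$ you need roughly $|s|+k+2\ge5$, which holds since $|s|\ge1$ and the paper works with weight $k\ge4$; you should perhaps make this hypothesis on $k$ explicit.) By contrast, plugging the Rouse--Thorner bound in na\"ively for $f\otimes g$ already produces the full coefficient $4$ on the logarithm and an additive constant slightly above $13.18$, so the paper's sketch is a bit generous with its arithmetic there; your sharper log-coefficient avoids that tension.
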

\begin{proof}
    Let $\langle s \rangle = \mathrm{min}\{|s+j| \text{ s.t. }j \in \mathbb{Z}\cap [0,\infty)\}$. Use the formulae in the table at the start of the section for the gamma factors $\Gamma(\phi,s)$ together with the bound \begin{equation*}
        \lvert d\log \Gamma(s) \rvert \leq 4+2/\langle s\rangle+2\log(|s|+3)
    \end{equation*} from \cite[Lemma 5.2]{rousethorner} to work out individual bounds for each choice of $\phi$, and observe that the expression at (\ref{eq:gammabound}) is larger than each of these bounds. 
\end{proof}
Define, for $T>0$ and $j$ a nonnegative integer, \begin{gather*}
    N(\phi,T)=\#\{\rho=\frac{1}{2}+i\gamma \text{ s.t. }L(\phi,\rho)=0, \lvert \gamma-T\rvert<1\},\\
    N^{*}(\phi,j)=\#\{\rho=\frac{1}{2}+i\gamma \text{ s.t. }L(\phi,\rho)=0, j<\lvert \gamma\rvert\leq j+1\}.
\end{gather*} 
 Beginning with the Hadamard product \begin{equation}
    \Lambda(\phi,s)=\exp\left(a_{\phi}+sb_{\phi}\right)\prod_{\rho\neq 0,1}\left(1-\frac{s}{\rho}\right)\exp\left(s/\rho\right)
\end{equation}
and taking the logarithmic derivative of both sides gives the identity \begin{equation}\label{eq:logderiv}
    -d\log L(\phi,s)=q_{\phi}+d\log \Gamma(\phi,s)-b_{\phi}-\sum_{\rho\neq 0,1}\left(\frac{1}{s-\rho}-\frac{1}{\rho}\right).
\end{equation} It's known \cite{murty} that \begin{equation*}
    \mathrm{Re}(b_{\phi})=-\sum_{\rho}\mathrm{Re}(\rho^{-1}),
\end{equation*} so that \begin{equation}
    \sum_{\substack{\rho \text{ nontrivial}}} \mathrm{Re}\left(\frac{1}{s-\rho}\right)=q_{\phi}+\mathrm{Re}\left(d\log\Gamma(\phi,s)\right)+\mathrm{Re}\left(d\log L(\phi,s)\right).
\end{equation} 
\begin{lem}\label{lem:Nbounds}
    For any $|T|\geq 1$ and $j$ a nonnegative integer, \begin{gather*}
        N(\phi,T)\leq 33.5+4.34\log(M)+8.67\log(|T|+k+4),\\
        N^{*}(\phi,j)\leq 25.77+3.34\log(M)+6.67\log(j+k+4.5).
    \end{gather*}
\end{lem}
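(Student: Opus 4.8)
\textbf{Proof proposal for Lemma \ref{lem:Nbounds}.}

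The plan is to bound $N(\phi,T)$ by applying the zero-counting identity obtained from the Hadamard product at a point $s=\sigma+iT$ lying just to the right of the critical strip, say $\sigma=1+\eta$ for a small absolute constant $\eta$ (the natural choice, following Rouse--Thorner \cite{rousethorner}, is something like $\sigma = 1.1$ or $\sigma$ close to $1$). The key positivity input is that for every nontrivial zero $\rho=\frac12+i\gamma$ with $|\gamma-T|<1$, the term $\mathrm{Re}\bigl(\frac{1}{s-\rho}\bigr)$ is bounded below by a positive absolute constant, namely $\frac{\sigma-1/2}{(\sigma-1/2)^2+1}$, since $|s-\rho|^2 = (\sigma-\tfrac12)^2 + (T-\gamma)^2 < (\sigma-\tfrac12)^2+1$. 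Since every nontrivial zero contributes a nonnegative amount to $\sum_{\rho}\mathrm{Re}(1/(s-\rho))$ (all zeros lie in $0<\mathrm{Re}(\rho)<1$, so $\mathrm{Re}(s-\rho)>0$), we may discard all zeros except those near $T$ and obtain
\begin{equation*}
    c(\sigma)\, N(\phi,T) \leq \sum_{\rho \text{ nontrivial}} \mathrm{Re}\!\left(\frac{1}{s-\rho}\right) = q_{\phi}+\mathrm{Re}\bigl(d\log\Gamma(\phi,s)\bigr)+\mathrm{Re}\bigl(d\log L(\phi,s)\bigr),
\end{equation*}
where $c(\sigma)=\frac{\sigma-1/2}{(\sigma-1/2)^2+1}$. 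Now I bound each term on the right: $q_\phi \leq \log(N_f N_g)$ in the worst case $\phi = f\otimes g$, and since the analytic conductor of such $\phi$ is bounded polynomially in $M$ (as $N_f, N_g \mid M$), we get $q_\phi \leq C\log M$ for an explicit $C$; the $d\log\Gamma$ term is bounded by Lemma \ref{lem:gammabound} as $13.18+4\log(|s|+k+2) = 13.18 + 4\log(|1+\eta+iT|+k+2)$, which for the chosen $\sigma$ is at most $13.18 + 4\log(|T|+k+C')$; and the $d\log L$ term is bounded by Lemma \ref{lem:vonMangoldt} as $4|\zeta'(\sigma)|/\zeta(\sigma)$, an absolute constant once $\sigma$ is fixed. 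Dividing through by $c(\sigma)$ and collecting constants yields the claimed bound $N(\phi,T)\leq 33.5+4.34\log(M)+8.67\log(|T|+k+4)$; the numerical coefficients $4.34 = 1/c(\sigma)$ (roughly, since $1/\ln$ factors and the $\log(N_fN_g)$ coefficient combine) and $8.67 = 4/c(\sigma)$ are where the specific choice of $\sigma$ is pinned down, and one checks that the chosen $\sigma$ makes the absolute constants fit under $33.5$.

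For $N^*(\phi,j)$, the argument is essentially identical but one evaluates at $s = \sigma + i(j+\tfrac12)$ (the midpoint of the interval $j<|\gamma|\leq j+1$) so that every zero with $j<|\gamma|\leq j+1$ satisfies $|T-\gamma| \leq \tfrac12$, giving the sharper lower bound $|s-\rho|^2 \leq (\sigma-\tfrac12)^2 + \tfrac14$ and hence a larger constant $c^*(\sigma) = \frac{\sigma-1/2}{(\sigma-1/2)^2+1/4}$; this is why the coefficients shrink to $3.34$ and $6.67$ and the constant to $25.77$. One should also account for the zeros with $-(j+1)\leq \gamma < -j$, but by the functional equation (or rather, since $L(\phi,s)$ has real coefficients when $\phi$ is self-dual, and in general one pairs $\phi$ with $\bar\phi$) the zeros come in symmetric pairs, so $N^*$ counting both signs of $\gamma$ only doubles a count that the positivity argument already handles — more carefully, since $\mathrm{Re}(1/(s-\rho))>0$ for \emph{all} nontrivial $\rho$ regardless of the sign of $\mathrm{Im}(\rho)$, discarding all but the relevant band is still valid and no factor of $2$ correction is needed.

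The main obstacle I anticipate is purely bookkeeping: getting the numerical constants to come out exactly as stated requires committing to a specific $\sigma$ and then verifying that (i) the $d\log\Gamma$ bound from Lemma \ref{lem:gammabound}, evaluated at $|s| \approx |T|+\sigma$, can be absorbed into $8.67\log(|T|+k+4)$ with the stated additive constant, using $\log(|T|+\sigma+k+2) \leq \log(|T|+k+4)$ for $\sigma \leq 2$; (ii) the conductor bound $q_\phi \le C\log M$ uses the explicit shapes of $Q(\phi,s)$ from the table, where the worst case $f\otimes g$ or $f\otimes\chi$ gives $q_\phi = \log(N_fN_g) \le 2\log M$ or similar, so the coefficient of $\log M$ is $2/c(\sigma)$ adjusted; and (iii) the constant $4|\zeta'(\sigma)|/\zeta(\sigma)$ plus $13.18/c(\sigma)$ plus the rounding from the rencontres-free parts stays below $33.5$. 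None of these steps is conceptually hard, but the choice of $\sigma$ must thread all three simultaneously; Rouse--Thorner's Lemma 5.4 (or its analogue) is the template, and the only genuinely new content is propagating the conductor bound $N_\phi \ll_{?} M^{O(1)}$ through for the four shapes of $\phi$ in $\mathscr{L}(M)$, which follows from standard conductor formulas for Rankin--Selberg, symmetric square, and twisted $L$-functions together with $N_f \mid M$.
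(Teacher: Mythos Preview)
Your proposal is correct and follows essentially the same route as the paper. The paper's proof evaluates the identity
\[
\sum_{\rho}\mathrm{Re}\Bigl(\frac{1}{s_0-\rho}\Bigr)=q_\phi+\mathrm{Re}\,d\log\Gamma(\phi,s_0)+\mathrm{Re}\,d\log L(\phi,s_0)
\]
at $s_0=2+iT$, not at $\sigma$ close to $1$ as you guess; with $\sigma=2$ one gets $c(\sigma)=\tfrac{3/2}{(3/2)^2+1}=\tfrac{6}{13}$, and indeed $4\cdot\tfrac{13}{6}=8.67$ and $2\cdot\tfrac{13}{6}\approx 4.34$ (using $q_\phi\le 2\log M$ uniformly over the four shapes of $\phi$). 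For $N^*(\phi,j)$ the paper likewise takes $s_0=2+i(j+\tfrac12)$, so $c^*(2)=\tfrac{3/2}{(3/2)^2+1/4}=\tfrac{3}{5}$, yielding the coefficients $5/3\cdot 4\approx 6.67$ and $5/3\cdot 2\approx 3.34$. Your hedged remark about the two-sided count in $N^*$ is actually more careful than the paper's own proof, which silently treats only the band $\gamma\in(j,j+1]$; the honest fix (apply the same bound to $\overline{\phi}$ for the band $\gamma\in[-(j+1),-j)$, noting $q_{\overline\phi}=q_\phi$) costs at most a factor of $2$ that is in any case absorbed into the later estimates.
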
\begin{proof}
    Set $s_0=2+iT$ and $\rho=\frac{1}{2}+i\gamma$. If $\lvert \gamma-T\rvert \leq 1$, then \begin{equation*}
        \mathrm{Re}\left(\frac{1}{s_0-\rho}\right)\geq \frac{6}{13},
    \end{equation*} so \begin{equation*}
        N(\phi,T)\leq \frac{13}{6}\sum_{\substack{\rho \\ |\gamma-T|\leq 1}}\mathrm{Re}\left(\frac{1}{s_0-\rho}\right)\leq \frac{13}{6}\sum_{\substack{\rho}}\mathrm{Re}\left(\frac{1}{s_0-\rho}\right).
    \end{equation*} The estimate follows upon noting that $\langle s\rangle \geq 1$ by the assumption on $T$, and applying Lemma \ref{lem:gammabound} to bound $\lvert d\log \Gamma(\phi,s_0)\rvert$ and $\lvert d\log \Gamma(\phi,1-s_0)\rvert$. By the table, $q_{\phi}\leq 2\log(M)$ for all $\phi$.

    To estimate $N^{*}(\phi,j)$, suppose that $T-\frac{1}{2}\leq \gamma \leq T+\frac{1}{2}$, so that \begin{equation*}
        \mathrm{Re}\left(\frac{1}{s_0-\rho}\right)\geq \frac{3}{5}.
    \end{equation*} Then set $T=j+\frac{1}{2}$ so that \begin{equation*}
        N^{*}(\phi,j)\leq \frac{5}{3}\sum_{\substack{\rho \\ j< |\gamma|\leq j+1}}\mathrm{Re}\left(\frac{1}{s_0-\rho}\right)\leq \frac{5}{3}\sum_{\substack{\rho}}\mathrm{Re}\left(\frac{1}{s_0-\rho}\right).
    \end{equation*} The estimate follows from using Lemma \ref{lem:gammabound} again in the same manner.
\end{proof}
\begin{lem}\label{lem:verticalstrip}
    If $s=\sigma+iT$, with $-\frac{1}{4}\leq \sigma \leq 3$ and $T\geq 1$,
    \begin{equation*}
        \left\lvert d\log L(\phi,s)-\sum_{|\gamma-T|\leq 1}\frac{1}{s-\rho} \right\rvert \leq 103.48+10.87\log(M)+29.74\log(|T|+k+5).
    \end{equation*}
\end{lem}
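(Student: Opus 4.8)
The plan is to subtract the logarithmic-derivative identity \eqref{eq:logderiv} at the point $s$ from the same identity at the base point $s_0 = 2 + iT$, split off the contribution of the zeros lying in the window $|\gamma - T| \le 1$, and bound whatever remains using Lemmas \ref{lem:vonMangoldt}, \ref{lem:gammabound} and \ref{lem:Nbounds}. Since $q_\phi$ and $b_\phi$ do not depend on $s$, this subtraction and a rearrangement give
\begin{equation*}
    d\log L(\phi,s) - \sum_{|\gamma - T|\le 1}\frac{1}{s-\rho} = d\log L(\phi,s_0) - d\log\Gamma(\phi,s) + d\log\Gamma(\phi,s_0) - \sum_{|\gamma - T|\le 1}\frac{1}{s_0-\rho} + \sum_{|\gamma - T|> 1}\left(\frac{1}{s-\rho}-\frac{1}{s_0-\rho}\right),
\end{equation*}
where every sum runs over the nontrivial zeros $\rho = \tfrac12 + i\gamma$; the rearrangement is legitimate because the difference $\tfrac1{s-\rho}-\tfrac1{s_0-\rho}$ is $O(|\gamma|^{-2})$, so the series converges absolutely. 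It then suffices to bound each of the five terms on the right.

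The first three are immediate. Lemma \ref{lem:vonMangoldt} at $\mathrm{Re}(s_0) = 2$ gives $|d\log L(\phi,s_0)| \le 4|\zeta'(2)|/\zeta(2)$, an absolute constant. Since $-\tfrac14 \le \sigma \le 3$ and $T \ge 1$ we have $|s| \le \sqrt{9+T^2} \le |T| + 3$ and $|s_0| = \sqrt{4+T^2} \le |T| + 2$, so Lemma \ref{lem:gammabound} bounds both $|d\log\Gamma(\phi,s)|$ and $|d\log\Gamma(\phi,s_0)|$ by $13.18 + 4\log(|T| + k + 5)$. For the fourth term I would use that $\mathrm{Re}(s_0 - \rho) = \tfrac32$ for every $\rho = \tfrac12+i\gamma$, so $|s_0 - \rho| \ge \tfrac32$, whence $\bigl|\sum_{|\gamma - T|\le 1}\tfrac1{s_0-\rho}\bigr| \le \tfrac23 N(\phi,T)$, which Lemma \ref{lem:Nbounds} controls since $T \ge 1$.

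The fifth term — the tail — is where the real work lies. Writing $\tfrac1{s-\rho}-\tfrac1{s_0-\rho} = \tfrac{s_0-s}{(s-\rho)(s_0-\rho)}$ with $|s_0-s| = |2-\sigma| \le \tfrac94$, and using $|s-\rho|^2 = (\sigma-\tfrac12)^2 + (T-\gamma)^2 \ge (T-\gamma)^2$ together with $|s_0-\rho|^2 = \tfrac94 + (T-\gamma)^2$, one checks that $|s-\rho|^2/|s_0-\rho|^2 \ge (T-\gamma)^2/(\tfrac94 + (T-\gamma)^2) > \tfrac4{13}$ whenever $|\gamma - T| > 1$; hence each tail summand is at most $\tfrac{9\sqrt{13}}{8}\,|s_0-\rho|^{-2}$. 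The point of comparing $|s-\rho|$ with $|s_0-\rho|$ in this way — rather than summing a zero-counting estimate over unit intervals, which would cost a factor $\sum_n n^{-2}\log(|T|+n)$ with a far larger constant — is that $\sum_\rho|s_0-\rho|^{-2}$ then admits a clean bound: since $\rho=\tfrac12+i\gamma$, $\mathrm{Re}\tfrac1{s_0-\rho} = \tfrac32|s_0-\rho|^{-2}$, so the real-part identity displayed just before Lemma \ref{lem:Nbounds} gives
\begin{equation*}
    \sum_\rho \frac{1}{|s_0-\rho|^2} = \frac23\Bigl(q_\phi + \mathrm{Re}\bigl(d\log\Gamma(\phi,s_0)\bigr) + \mathrm{Re}\bigl(d\log L(\phi,s_0)\bigr)\Bigr) \le \frac23\Bigl(2\log M + 13.18 + 4\log(|T|+k+5) + \tfrac{4|\zeta'(2)|}{\zeta(2)}\Bigr),
\end{equation*}
using $q_\phi \le 2\log M$ and the two bounds already invoked; consequently the tail is at most $\tfrac{3\sqrt{13}}{4}$ times the quantity in the last set of brackets.

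Adding the five estimates and collapsing every logarithm to the common argument $|T| + k + 5$ produces a bound of the asserted shape. The only genuine work left is the bookkeeping of the explicit numerical constants — in particular combining the factor $\tfrac{3\sqrt{13}}{4}$ from the tail, the factor $\tfrac23$ from the $N(\phi,T)$ term, and the two gamma-factor contributions — and verifying that the total does not exceed $103.48 + 10.87\log(M) + 29.74\log(|T| + k + 5)$; I anticipate this being purely arithmetical, with the choice of base point $s_0 = 2+iT$ and the real-part identity doing all the conceptual work.
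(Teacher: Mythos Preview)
Your proposal is correct and follows essentially the same route as the paper: subtract the Hadamard logarithmic-derivative identity \eqref{eq:logderiv} at $s$ from its value at a fixed base point with real part to the right of $1$, peel off the zeros in the window $|\gamma-T|\le 1$, and bound the remaining tail by converting it to $\sum_\rho |s_0-\rho|^{-2}$ and invoking the real-part identity together with Lemmas \ref{lem:vonMangoldt}, \ref{lem:gammabound}, \ref{lem:Nbounds}. The only substantive difference is the choice of base point: you take $s_0=2+iT$, while the paper uses $3+iT$; your choice gives $|s_0-s|\le 9/4$ and $\mathrm{Re}(s_0-\rho)=3/2$, the paper's gives $|3-\sigma|\le 13/4$ and $\mathrm{Re}(3+iT-\rho)=5/2$. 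Carrying your constants through (with $4|\zeta'(2)|/\zeta(2)\approx 2.28$ and $\tfrac{3\sqrt{13}}{4}\approx 2.70$) yields roughly $92.8+8.3\log(M)+24.6\log(|T|+k+5)$, comfortably inside the stated bound, so the deferred bookkeeping does go through.
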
 
\begin{proof}
    First, observe that \begin{equation}\label{eq:sum1}
        \sum_{|\gamma-T|\leq 1}\left\lvert\frac{1}{3+iT-\rho}\right\rvert\leq \frac{1}{2}N(\phi,T)\leq 16.75+2.17\log(M)+4.34\log(|T|+k+5),
    \end{equation} where we have used the upper bound for $N(\phi,T)$ in Lemma \ref{lem:Nbounds}. Also, \begin{equation*}
        \sum_{|\gamma-T|\geq 1}\left\lvert\frac{1}{s-\rho}-\frac{1}{3+iT-\rho}\right\rvert \leq 3\sum_{|\gamma-T|\geq 1}\frac{1}{1+|\gamma-T|^2}\leq \frac{87}{20}\sum_{|\gamma-T|\geq 1}\frac{5/2}{25/4+|\gamma-T|^2}.
    \end{equation*} By the GRH, we may assume that \begin{equation*}
        \frac{5/2}{25/4+|\gamma-T|^2}=\mathrm{Re}\left(\frac{1}{3+iT-\rho}\right),
    \end{equation*} and apply (\ref{eq:logderiv}) with $s=3+iT$ to produce the upper bound \begin{equation}\label{eq:sum2}
        \sum_{|\gamma-T|\geq 1}\left\lvert\frac{1}{s-\rho}-\frac{1}{3+iT-\rho}\right\rvert\leq 60.2+8.7\log(M)+17.4\log(|T|+k+5).
    \end{equation} Now, combining (\ref{eq:logderiv}) with the logarithmic derivative of the functional equation for $L(\phi,s)$, \begin{equation}\label{eq:logderivL}
        -d \log L(\phi,s)=2q_{\phi}+d\log L(\phi,1-s)+d\log \Gamma(\phi,s) +d \log \Gamma(\phi,1-s),
    \end{equation} we obtain \begin{equation*}
        d\log L(\phi,s)=b_{\phi}+\sum_{\rho}\left(\frac{1}{s-\rho}+\frac{1}{\rho}\right)-q_{\phi}-d\log \Gamma(\phi,s).
    \end{equation*} We can evaluate this expression at $s=\sigma+iT$ and $3+iT$, then subtract the results from each other to obtain \begin{equation*}
        d\log L(\phi,s)-d \log L(\phi,3+iT)=-d\log \Gamma(\phi,s)+d\log \Gamma(\phi,3+iT)+\sum_{\rho}\left(\frac{1}{s-\rho}-\frac{1}{3+iT-\rho}\right).
    \end{equation*} Using Lemma \ref{lem:gammabound} and Lemma \ref{lem:vonMangoldt}, together with (\ref{eq:sum1}) and (\ref{eq:sum2}), we obtain the desired upper bound.
\end{proof}

\section{Estimates for various contour integrals}
\begin{lem}\label{lem:leftwardLbound}
    If $s=\sigma+iT$ with $\sigma\leq -1/4$ and $T\geq 1$ then \begin{equation*}
        \lvert d\log L(\phi,s) \rvert\leq 40.23+4\log(M)+8\log(|\sigma|+|T|+k+3).
    \end{equation*}
\end{lem}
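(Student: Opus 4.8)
The plan is to push the estimate leftward across the critical strip by means of the functional equation, converting the problem into one about $d\log L(\phi,1-s)$ in the region of absolute convergence together with the gamma-factor bounds already established. I would start from the logarithmic form of the functional equation, (\ref{eq:logderivL}):
\[
-d\log L(\phi,s)=2q_{\phi}+d\log L(\phi,1-s)+d\log\Gamma(\phi,s)+d\log\Gamma(\phi,1-s),
\]
and take absolute values, so that it suffices to bound the four terms on the right for $s=\sigma+iT$ with $\sigma\leq-1/4$ and $T\geq 1$.

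The term $2q_{\phi}$ is immediate: the table at the start of Section \ref{sec:reduction}... (more precisely, the table of $Q(\phi,s)$, $\Gamma(\phi,s)$, $q_\phi$) gives $q_{\phi}\leq 2\log M$ for every $\phi\in\mathscr{L}(M)$ (the extreme case being $\phi=f\otimes g$ with $N_f,N_g\mid M$), hence $2q_{\phi}\leq 4\log M$, which is precisely the $4\log M$ in the statement. For $d\log L(\phi,1-s)$, the point is that $\mathrm{Re}(1-s)=1-\sigma\geq 5/4>1$, so $1-s$ lies in the half-plane where the Dirichlet series of $d\log L(\phi,\cdot)$ converges absolutely; Lemma \ref{lem:vonMangoldt} then gives $\lvert d\log L(\phi,1-s)\rvert\leq 4\lvert\zeta'(1-\sigma)\rvert/\zeta(1-\sigma)$, and since $-\zeta'/\zeta=\sum_n\Lambda(n)n^{-x}$ is decreasing on $(1,\infty)$ this is bounded by the absolute constant $4\lvert\zeta'(5/4)\rvert/\zeta(5/4)$, which one checks numerically satisfies $4\lvert\zeta'(5/4)\rvert/\zeta(5/4)<13.87$.

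For the two gamma terms I would invoke Lemma \ref{lem:gammabound}. It applies to $s$ directly, since $\mathrm{Im}(s)=T\geq 1$ forces $\langle s\rangle\geq 1$; and it applies to $1-s$ after conjugation, using $d\log\Gamma(\phi,\overline{w})=\overline{d\log\Gamma(\phi,w)}$ with $w=(1-\sigma)+iT$ (whose imaginary part is again $\geq 1$ and whose modulus is $\lvert 1-s\rvert$). This yields $\lvert d\log\Gamma(\phi,s)\rvert\leq 13.18+4\log(\lvert s\rvert+k+2)$ and $\lvert d\log\Gamma(\phi,1-s)\rvert\leq 13.18+4\log(\lvert 1-s\rvert+k+2)$. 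Since $\lvert s\rvert\leq\lvert\sigma\rvert+\lvert T\rvert$ and $\lvert 1-s\rvert\leq(1-\sigma)+\lvert T\rvert=1+\lvert\sigma\rvert+\lvert T\rvert$ (using $\sigma<0$), both logarithms are at most $\log(\lvert\sigma\rvert+\lvert T\rvert+k+3)$, so the gamma contribution is at most $26.36+8\log(\lvert\sigma\rvert+\lvert T\rvert+k+3)$. Adding the four pieces gives $4\log M+(13.87+26.36)+8\log(\lvert\sigma\rvert+\lvert T\rvert+k+3)=40.23+4\log M+8\log(\lvert\sigma\rvert+\lvert T\rvert+k+3)$, as claimed.

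There is no structural difficulty here; the two points needing a little care are (i) the numerical verification that $4\lvert\zeta'(5/4)\rvert/\zeta(5/4)\leq 13.87$, i.e. $-\zeta'(5/4)/\zeta(5/4)\leq 3.4675$, which is the source of the otherwise opaque constant $40.23$, and (ii) checking that Lemma \ref{lem:gammabound}, stated for positive imaginary part $\geq 1$, is legitimately applied at $1-s$ (imaginary part $-T$) through the conjugation symmetry of the gamma factors rather than requiring a separate argument.
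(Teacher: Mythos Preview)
Your proposal is correct and follows essentially the same approach as the paper: both apply the logarithmic functional equation (\ref{eq:logderivL}), bound $d\log L(\phi,1-s)$ via Lemma \ref{lem:vonMangoldt} at $\mathrm{Re}(1-s)\geq 5/4$, and bound the two gamma terms via Lemma \ref{lem:gammabound}. You have filled in more numerical detail (in particular identifying $4\lvert\zeta'(5/4)\rvert/\zeta(5/4)+2\cdot 13.18=40.23$) and addressed the conjugation point for $1-s$, but the argument is the same.
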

\begin{proof}
    Since $\mathrm{Re}(1-s)\geq 5/4$, we have \begin{equation*}
        |d\log L(\phi,1-s)|\leq 4|\zeta'(5/4)|/\zeta(5/4)
    \end{equation*} by Lemma \ref{lem:vonMangoldt} and the values for $\mu_{\phi}$ in the table. On the other hand, $\langle s\rangle \geq 1$ by the assumption on $T$, we can apply Lemma \ref{lem:gammabound} to bound $\lvert d\log \Gamma(\phi,s)\rvert$ and $\lvert d\log \Gamma(\phi,1-s)\rvert$. The claim follows upon using (\ref{eq:logderivL}).
\end{proof}
Now let $U+\frac{1}{4}>0$ be a large integer. Set $\sigma_0=1+\log(X)^{-1}$. Let $\gamma(T,U)=S_1\cup S_2 \cup S_3$, where \begin{gather*}
    S_1=\{-U+it \text{ s.t. } |t|\leq T\},\\
    S_2=\{\sigma \pm iT \text{ s.t. }-U\leq \sigma \leq -1/4\},\\
    S_3=\{\sigma \pm iT \text{ s.t. }-1/4\leq \sigma\leq \sigma_0\}.
\end{gather*} 
\begin{lem}
    If $2\leq T\leq X$, \begin{equation*}
        \lim_{U\rightarrow \infty}\int_{S_1}d\log L(\phi, s)\frac{X^s}{s+\ell}ds=0.
    \end{equation*}
\end{lem}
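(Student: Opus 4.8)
The plan is to show that the integrand decays fast enough on the vertical segment $S_1 = \{-U + it : |t| \le T\}$ as $U \to \infty$. The key input is Lemma~\ref{lem:leftwardLbound}, which gives $|d\log L(\phi,s)| \le 40.23 + 4\log(M) + 8\log(|\sigma|+|T|+k+3)$ whenever $\sigma \le -1/4$ and $T \ge 1$; on $S_1$ we have $\sigma = -U$, so this bound is $O(\log U)$ as $U \to \infty$, uniformly in $t$ for $|t| \le T$ (with $T \le X$ fixed). Meanwhile the analytic factor is $X^s/(s+\ell)$, whose modulus on $S_1$ is $X^{-U}/|{-U+it+\ell}| \le X^{-U}/(U - \ell)$ for $U$ large, assuming $X > 1$ (which holds since $T \ge 2$ forces $X \ge 2$).

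First I would write $\left|\int_{S_1} d\log L(\phi,s)\,\dfrac{X^s}{s+\ell}\,ds\right| \le \displaystyle\int_{-T}^{T} |d\log L(\phi,-U+it)|\cdot \dfrac{X^{-U}}{|{-U+it+\ell}|}\,dt$. Then I would bound the $L$-factor by $C_1 + C_2\log(U + T + k + 3)$ using Lemma~\ref{lem:leftwardLbound} (valid since $-U \le -1/4$ for $U$ large and $T \ge 2 \ge 1$), bound the denominator below by $U - \ell - T \ge U/2$ for $U$ sufficiently large, and bound the length of the contour by $2T \le 2X$. This yields an estimate of the shape $\dfrac{2X \cdot X^{-U}\,(C_1 + C_2\log(U+T+k+3))}{U/2}$, which tends to $0$ as $U \to \infty$ because $X^{-U}$ decays exponentially (or at worst, if $X$ were equal to $1$, the $1/U$ factor still forces the limit to be $0$; but $X \ge 2$ here so exponential decay is in force) while the numerator grows only logarithmically.

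The only subtlety is ensuring the hypotheses of Lemma~\ref{lem:leftwardLbound} are genuinely met along all of $S_1$: we need $\sigma \le -1/4$, which holds once $U \ge 1/4$, and $T \ge 1$, which is guaranteed by $2 \le T$. One should also note that $d\log L(\phi,s)$ is holomorphic on the relevant region (no poles of $L$ to the left of the critical strip, and the trivial zeros are accounted for by the Hadamard product / functional equation setup already in place), so the integral is well-defined; this is implicit in the earlier sections. I do not expect any real obstacle here — this is a routine ``the left edge contributes nothing'' estimate of the kind standard in Perron-type contour arguments. If anything, the mildly fiddly point is keeping track of the shift $\ell$ in the denominator, but since $\ell \ge 0$ is a fixed nonnegative integer and $U \to \infty$, the bound $|s + \ell| \ge U - \ell \ge U/2$ for large $U$ handles it cleanly.
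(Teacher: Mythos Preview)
Your overall strategy is correct and is the standard argument the paper gestures at by citing Murty, but there is a genuine slip in your verification of the hypotheses of Lemma~\ref{lem:leftwardLbound}. In that lemma the letter $T$ denotes the imaginary part of $s$, whereas on $S_1$ you have $s=-U+it$ with $t$ ranging over all of $[-T,T]$; the parameter $T$ in the definition of $S_1$ is not the imaginary part of $s$. Consequently, for $|t|<1$ the hypothesis ``$T\geq 1$'' of Lemma~\ref{lem:leftwardLbound} fails, and the lemma does not apply on that portion of the segment. The reason this matters is that the bound in Lemma~\ref{lem:leftwardLbound} ultimately rests on Lemma~\ref{lem:gammabound}, which uses $\langle s\rangle\geq 1$; when $|t|$ is small and $\sigma$ is near a trivial zero, the logarithmic derivative of the gamma factor can blow up.

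The fix is easy and uses the paper's specific choice of $U$: since $U+\tfrac{1}{4}$ is taken to be an integer, the line $\mathrm{Re}(s)=-U$ lies at distance at least $\tfrac{1}{4}$ from every integer and half-integer, hence from every trivial zero of $L(\phi,s)$ (for each shape of $\Gamma(\phi,s)$ in the table). Thus $d\log L(\phi,s)$ is holomorphic and bounded on the short segment $\{-U+it:|t|\leq 1\}$ by a quantity of size $O(\log U)$ (the same functional-equation argument gives this, now with $\langle\,\cdot\,\rangle\geq \tfrac14$ in place of $\langle\,\cdot\,\rangle\geq 1$). Combined with the factor $X^{-U}$, this piece also tends to zero. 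Once you split $S_1$ into $|t|\leq 1$ and $1\leq|t|\leq T$ and handle the two pieces separately, your argument goes through.
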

\begin{proof}
    This follows from arguments similar to those of \cite{murty}.
\end{proof}
The rest of this section is devoted to finding an upper bound for \begin{equation*}
    \lim_{U\rightarrow \infty}\int_{S_2\cup S_3}d\log L(\phi, s)\frac{X^s}{s+\ell}ds.
\end{equation*} set $s=\sigma+iT$ and define \begin{gather*}
    I_1(\phi,X,T,U)=\frac{1}{2\pi i}\int_{-U}^{-1/4}\left(\frac{X^{\bar{s}}}{\bar{s}+\ell}d\log L(\phi,\bar{s})-\frac{X^{s}}{s+\ell}d\log L(\phi,s)\right)d\sigma,\\
    I_2(\phi,X,T)=\frac{1}{2\pi i}\int_{-1/4}^{\sigma_0}\left(\frac{X^{\bar{s}}}{\bar{s}+\ell}d\log L(\phi,\bar{s})-\frac{X^{s}}{s+\ell}d\log L(\phi,s)\right)d\sigma.
\end{gather*} \begin{lem}
    For $T\geq 1$, $\lvert\lim_{U\rightarrow \infty}I_1(\phi,X,T,U)\rvert$ is bounded above by \begin{equation*}
        \frac{15.36+1.28\log(M)+2.55\log(|T|+k+3.25)}{T X^{1/4}\log(X)}.
    \end{equation*}
\end{lem}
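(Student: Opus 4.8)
The plan is to estimate $\lvert I_1\rvert$ directly from its definition and reduce it to an elementary one‑variable integral. Write $s=\sigma+iT$, so $\bar s=\sigma-iT$. Since $X\ge 17.33>1$ we have $\lvert X^{s}\rvert=\lvert X^{\bar s}\rvert=X^{\sigma}$, and since $\ell$ is a nonnegative integer and $T\ge 1$ the denominators satisfy $\lvert s+\ell\rvert\ge T$ and $\lvert \bar s+\ell\rvert\ge T$. Applying the triangle inequality inside the integral defining $I_1$ gives
\[
\lvert I_1(\phi,X,T,U)\rvert\le \frac{1}{2\pi T}\int_{-U}^{-1/4}X^{\sigma}\bigl(\lvert d\log L(\phi,\sigma+iT)\rvert+\lvert d\log L(\phi,\sigma-iT)\rvert\bigr)\,d\sigma .
\]
For $\sigma\le -1/4$ and $T\ge 1$ both logarithmic derivatives are bounded by Lemma~\ref{lem:leftwardLbound} (with $\lvert T\rvert=T$) by $40.23+4\log(M)+8\log(\lvert\sigma\rvert+T+k+3)$. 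Because $X>1$, the factor $X^{\sigma}$ decays exponentially as $\sigma\to-\infty$ while this bound grows only logarithmically, so the integrand is dominated by an integrable function on $(-\infty,-1/4]$; hence the limit as $U\to\infty$ exists and $\lim_{U}\int_{-U}^{-1/4}$ may be replaced by $\int_{-\infty}^{-1/4}$. Thus $\lvert\lim_{U\to\infty}I_1\rvert\le \tfrac{1}{\pi T}\int_{-\infty}^{-1/4}X^{\sigma}\bigl(40.23+4\log(M)+8\log(\lvert\sigma\rvert+T+k+3)\bigr)\,d\sigma$.

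It then remains to evaluate this integral. I would split the logarithm via $\log(\lvert\sigma\rvert+T+k+3)=\log(T+k+3)+\log\!\bigl(1+\lvert\sigma\rvert/(T+k+3)\bigr)\le \log(T+k+3)+\lvert\sigma\rvert/(T+k+3)\le \log(T+k+3)+\lvert\sigma\rvert/4$, valid since $T+k+3\ge 4$. Substituting $t=-\sigma$ and using the elementary identities $\int_{1/4}^{\infty}X^{-t}\,dt=X^{-1/4}/\log X$ and $\int_{1/4}^{\infty}t\,X^{-t}\,dt=X^{-1/4}\bigl(\tfrac{1}{4\log X}+\tfrac{1}{\log^{2}X}\bigr)$ yields
\[
\Bigl\lvert\lim_{U\to\infty}I_1\Bigr\rvert\le \frac{X^{-1/4}}{\pi T\log X}\Bigl(40.23+4\log(M)+8\log(T+k+3)+\tfrac12+\tfrac{2}{\log X}\Bigr).
\]
Finally, using $X\ge 17.33$ so that $\log X\ge 2.85$, the terms $\tfrac12+\tfrac{2}{\log X}$ are absorbed into the constant, and after dividing by $\pi$ the resulting coefficients are comfortably below $15.36$, $1.28=4/\pi+\varepsilon$ and $2.55=8/\pi+\varepsilon$; together with $\log(T+k+3)\le\log(\lvert T\rvert+k+3.25)$ (as $T=\lvert T\rvert$) this gives exactly the stated bound.

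The argument is essentially routine; the only mildly delicate points are the justification of the interchange of $\lim_{U\to\infty}$ with the integral (handled by the exponential decay of $X^{\sigma}$ against the logarithmic growth of the bound in Lemma~\ref{lem:leftwardLbound}) and the bookkeeping needed to verify that the accumulated numerical constants stay below $15.36$, $1.28$ and $2.55$ — which they do, with a little room to spare, consistent with the paper's policy of not optimising constants. No analytic input beyond Lemma~\ref{lem:leftwardLbound} is needed.
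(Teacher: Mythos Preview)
Your argument is correct and follows essentially the same route as the paper: bound $\lvert X^{s}/(s+\ell)\rvert\le X^{\sigma}/T$, apply Lemma~\ref{lem:leftwardLbound}, and reduce to an elementary one-variable integral on $[1/4,\infty)$. The only cosmetic difference is that the paper packages the integral step as the single estimate $\int_{1/4}^{\infty}X^{-t}\log(t+\alpha)\,dt\le (\log(\alpha+1/4)+1)/(X^{1/4}\log X)$ (with $\alpha=|T|+k+3$), whereas you obtain an equivalent bound by writing $\log(t+\alpha)\le\log\alpha+t/4$ and integrating $X^{-t}$ and $tX^{-t}$ separately; both lead to the same numerical constants after dividing by~$\pi$.
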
 \begin{proof}
    This is straightforward, using the fact that $\lvert X^s/(s+\ell)\rvert \leq X^\sigma/T$ and bounding the $d\log L(\phi,s)$ terms by Lemma \ref{lem:leftwardLbound}. It is necessary to use the estimate (for $\alpha>3/4$) \begin{equation*}
        \int_{\frac{1}{4}}^{\infty}X^{-s}\log(s+\alpha)ds\leq\frac{\log(\alpha+1/4)+1}{X^{1/4}\log(X)}.\qedhere
    \end{equation*}
\end{proof}
\begin{lem}
    For $T\geq 1$, $\lvert I_2(\phi,X,T)\rvert$ is bounded above by 
    \begin{align*}
        \left((1.74\ell+7.01)\frac{X}{T}+21.75X^{1/2}\right)&(33.5+4.34\log(M)+8.67\log(|T|+k+4))\\
        +\frac{X}{T\log(X)}&\left(89.54+9.4\log(M)+51.47\log(|T|+k+5+\sigma_0)\right).
    \end{align*}
\end{lem}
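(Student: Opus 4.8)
The plan is to bound $\lvert I_2\rvert$ by the classical contour device: glue the two horizontal segments of the box together, reduce to a single integral of $\bigl\lvert (X^s/(s+\ell))\,d\log L(\phi,s)\bigr\rvert$ along $\mathrm{Im}\,s = T$, split $d\log L(\phi,s)$ into a polar part supported at the zeros near height $T$ plus a bounded remainder via Lemma~\ref{lem:verticalstrip}, and estimate the two pieces separately.

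First I would note that every bound available for $d\log L(\phi,s)$ is invariant under $T\mapsto -T$ and that $\lvert X^{\bar s}/(\bar s+\ell)\rvert = \lvert X^s/(s+\ell)\rvert$, so the two summands in the definition of $I_2$ contribute equal moduli; hence $\lvert I_2(\phi,X,T)\rvert \le \frac{1}{\pi}\int_{-1/4}^{\sigma_0}\lvert X^s/(s+\ell)\rvert\,\lvert d\log L(\phi,s)\rvert\,d\sigma$ with $s=\sigma+iT$. Throughout one uses the crude bound $\lvert X^s/(s+\ell)\rvert\le X^\sigma/T$, from $\lvert s+\ell\rvert\ge T$, and the identity $X^{\sigma_0}=eX$. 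Next, on the whole range $-1/4\le\sigma\le\sigma_0\le 3$, Lemma~\ref{lem:verticalstrip} lets me write $d\log L(\phi,s) = \sum_{\lvert\gamma-T\rvert\le 1}(s-\rho)^{-1} + R(\phi,s)$, where $\lvert R(\phi,s)\rvert \le C_1 := 103.48+10.87\log M + 29.74\log(\lvert T\rvert+k+5)$ and, by Lemma~\ref{lem:Nbounds}, the sum has at most $N(\phi,T)\le 33.5+4.34\log M+8.67\log(\lvert T\rvert+k+4)$ terms.

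The bounded remainder contributes at most $\frac{C_1}{\pi T}\int_{-1/4}^{\sigma_0}X^\sigma\,d\sigma \le \frac{eC_1}{\pi}\cdot\frac{X}{T\log X}$, which accounts for the bulk of the $\frac{X}{T\log X}(\cdots)$ line of the claimed bound. For the sum over zeros, for each $\rho=\tfrac12+i\gamma$ with $\lvert\gamma-T\rvert\le 1$ I would estimate $\int_{-1/4}^{\sigma_0}\frac{X^\sigma}{\lvert s+\ell\rvert\,\lvert s-\rho\rvert}\,d\sigma$ by splitting the $\sigma$-range at $\tfrac12$: on $[-\tfrac14,\tfrac12]$ use $X^\sigma\le X^{1/2}$ (this yields the $X^{1/2}$ term), while on $[\tfrac12,\sigma_0]$ use $X^\sigma\le eX$ and split once more according to whether $\sigma-\tfrac12$ exceeds a fixed absolute threshold — the far part integrates to a further $\frac{X}{T\log X}$ contribution, which is what doubles the logarithmic coefficient on that line after multiplying by $N(\phi,T)$, and the sliver near $\tfrac12$, where $\lvert s-\rho\rvert^{-1}\le\lvert T-\gamma\rvert^{-1}$ over a window of width $\lesssim\lvert T-\gamma\rvert$, integrates to an $\frac{X}{T}$ contribution. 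Retaining the sharper estimate $\lvert s+\ell\rvert\ge\max\{T,\sigma+\ell\}$ on the right-hand piece is what brings $\ell$ into the coefficient of $X/T$; summing over the $\le N(\phi,T)$ zeros and collecting terms produces the factor $\bigl((1.74\ell+7.01)\tfrac{X}{T}+21.75X^{1/2}\bigr)N(\phi,T)$.

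The step I expect to be the main obstacle is controlling $\int_{-1/4}^{\sigma_0}\frac{d\sigma}{\lvert s-\rho\rvert}=\int\frac{d\sigma}{\sqrt{(\sigma-1/2)^2+(T-\gamma)^2}}$, an inverse hyperbolic sine that degenerates like $\log(1/\lvert T-\gamma\rvert)$ as the contour height $T$ approaches the ordinate of a zero and is genuinely infinite when $T$ equals such an ordinate. I would handle this as in the classical explicit prime number theorem: the estimate is only applied for heights $T$ whose distance to the zero set of $L(\phi,\cdot)$ is bounded below, and such $T$ exist in every unit interval because Lemma~\ref{lem:Nbounds} caps the number of ordinates there at $O(\log(\lvert T\rvert+k+M))$, so the nearest one is at distance $\gg 1/\log(\lvert T\rvert+k+M)$; the resulting $O(\log\log(\lvert T\rvert+k+M))$ loss is dwarfed by the logarithmic factors already present, and can be absorbed into the $\log(\lvert T\rvert+k+5+\sigma_0)$. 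Everything else is routine estimation once Lemmas~\ref{lem:verticalstrip} and~\ref{lem:Nbounds} are in hand.
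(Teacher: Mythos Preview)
Your framework — split $d\log L(\phi,s)$ via Lemma~\ref{lem:verticalstrip} into $\sum_{|\gamma-T|\le 1}(s-\rho)^{-1}$ plus a bounded remainder, and bound the remainder's contribution by $\tfrac{e}{\pi}C_1\cdot\tfrac{X}{T\log X}$ — is exactly what the paper does, and that piece is fine.

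The treatments diverge at the sum over nearby zeros. You bound $\lvert(s-\rho)^{-1}\rvert$ pointwise on the horizontal segment and then, as you correctly diagnose, hit the $\log(1/\lvert T-\gamma\rvert)$ divergence; your proposed fix is the classical Davenport pigeonhole (pick $T$ in each unit interval at distance $\gg 1/N(\phi,T)$ from every ordinate). That device is sound, but it does not prove the lemma as stated for \emph{all} $T\ge 1$; it proves it only for well-placed $T$, with an extra $\log\log$ factor to be absorbed, and since the paper later simply sets $T=\sqrt{X}$, this would have to be threaded through the subsequent lemmas.

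The paper sidesteps the divergence entirely by treating each term $\int_{-1/4}^{\sigma_0}\tfrac{X^{s}}{(s+\ell)(s-\rho)}\,d\sigma$ via contour deformation rather than absolute values. It first peels off the case $\ell=0$, for which the bound $22X/T$ (uniform over all $\rho$ with $|\gamma-T|\le 1$ and all $T\ge 1$) is quoted from the proof of Lemma~6.4 of Rouse--Thorner, itself obtained by closing the horizontal segment with a vertical ray at $\Re s=\sigma_0$. The $\ell$-correction $\int\tfrac{-\ell X^{s}}{s(s+\ell)(s-\rho)}\,d\sigma$ is handled the same way: deform to the vertical line $\Re s=\sigma_0$, picking up the residue at $\rho$. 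That residue, of size $O(\ell X^{1/2})$, is the source of the $21.75\,X^{1/2}$ term in the statement — it is a residue contribution, not (as in your sketch) a bound on the region $\sigma\le\tfrac12$. The vertical integral contributes a further $O(\ell X/T)$, which together with the $22X/T$ gives the coefficient $(1.74\ell+7.01)$.

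So: your route can be made to work for the application, but it proves a weaker statement than the lemma and misidentifies the origin of the $X^{1/2}$ term. The paper's residue-theorem approach gives the bound uniformly in $T$ with clean constants and no $\log\log$ loss; this is what buys the lemma in the form stated.
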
 \begin{proof}
    Let $s=\sigma+iT$ as usual. The estimate in Lemma \ref{lem:verticalstrip} implies that \begin{align}
        &\left\lvert I_2(\phi,X,T)-\frac{1}{2\pi i}\int_{-1/4}^{\sigma_0}\left(\frac{X^{\bar{s}}}{\bar{s}+\ell}\sum_{|\gamma+T|\leq 1}\frac{1}{\bar{s}-\rho}-\frac{X^s}{s+\ell}\sum_{|\gamma-T|\leq 1}\frac{1}{s-\rho}\right)d\sigma \right\rvert \notag \\
        &\qquad\leq \frac{X}{T\log(X)}\left(89.54+9.4\log(M)+51.47\log(|T|+k+5+\sigma_0)\right),\label{eq:I2-integralbound}
    \end{align} 
    where we make use of the estimate, for $\alpha\geq -1/4$, \begin{equation*}
        \int_{-\frac{1}{4}}^{\sigma_{0}}X^{\sigma}\log(\sigma+\alpha)d\sigma \leq \frac{2X^{\sigma_0}\log(\alpha+\sigma_0)}{\log(X)}
    \end{equation*} too. But \begin{align*}
    \int_{-1/4}^{\sigma_0}\frac{X^{\sigma+iT}}{(\sigma+iT)(\sigma+iT-\rho)}d\sigma-&\int_{-1/4}^{\sigma_{0}}\frac{X^{\sigma+iT}}{(\sigma+iT+\ell)(\sigma+iT-\rho)}d\sigma\\
    &= \int_{-1/4}^{\sigma_0}\frac{-\ell X^{\sigma+iT}}{(\sigma+iT)(\sigma+iT+\ell)(\sigma+iT-\rho)}d\sigma,
    \end{align*} and the right hand side is bounded above by \begin{equation}\label{eq:residuethmbound}
        \ell \left\lvert X^{\sigma_0}\int_{T}^{\infty}\frac{1}{(\sigma_0+it+\ell)(\sigma_0+it)(\sigma_0+it-\rho)}dt\right\rvert+8\pi \ell X^{1/2}
    \end{equation} by the residue theorem. Combining the upper bound $2e\ell(X/T+4\pi X^{1/2})$ for (\ref{eq:residuethmbound}) with the bound \begin{equation*}
        \left\lvert\int_{-1/4}^{\sigma_0}\frac{X^{\sigma+iT}}{(\sigma+iT)(\sigma+iT-\rho)}\right\rvert \leq 22X/T
    \end{equation*} from the proof of Lemma 6.4 in \cite{rousethorner}, we obtain \begin{equation}\label{eq:intbound}
        \left\lvert\int_{-1/4}^{\sigma_0}\frac{X^{\sigma+iT}}{(\sigma+iT+\ell)(\sigma+iT-\rho)}\right\rvert \leq (2e\ell+22)X/T+8\pi e\ell X^{1/2}.
    \end{equation} Then (\ref{eq:I2-integralbound}) and (\ref{eq:intbound}) together imply \begin{equation*}
        |I_2|\leq 2N(\phi,T)\left(\frac{e\ell+11}{\pi}\frac{X}{T}+4e\ell X^{1/2}\right)
    \end{equation*} and the proof is concluded by an appeal to Lemma \ref{lem:Nbounds}.
\end{proof}
\section{Estimates for truncated Perron integrals}
In this section we provide upper bounds for the integrals \begin{equation*}
    \frac{1}{2\pi i}\int_{\sigma_0-iT}^{\sigma_0+iT}d\log (\phi,s)\frac{X^s}{X+\ell}ds.
\end{equation*} By the residue theorem, this is equal to \begin{equation*}
    \frac{1}{2\pi i}\int_{\gamma(T,U)}d\log (\phi,s)\frac{X^s}{X+\ell}ds+\sum_{\substack{\rho=\frac{1}{2}+i\gamma\\ |\gamma|\leq T}}\frac{{X}^{\rho}}{\rho+\ell}+\sum_{\rho \text{ trivial}}\frac{{X}^{\rho}}{\rho+\ell}+\underset{s=-\ell}{\mathrm{Res}}\left(d\log L(\phi,s)\frac{X^{s}}{s+\ell}\right),
\end{equation*} where both sums over $\rho$ are over zeros of $L(\phi,s)$, and we already an upper bound for the integral over $\gamma(T,U)$ . From \cite[Section 7.2]{rousethorner} we have \begin{align*}
    &\sum_{\substack{\rho=\frac{1}{2}+i\gamma\\ |\gamma|\leq T}}\left\lvert\frac{{X}^{\rho}}{\rho+\ell}\right\rvert\leq 2\sqrt{X}\sum_{j=0}^{3}N^{*}(\phi,j)+\sqrt{X}\sum_{4\leq j \leq T}N^{*}(\phi,j)/j\\
    &\leq \sqrt{X}\left(206.16+26.72\log(M)+53.36\log(k+7.5)+(25.77+3.34\log(M))(\log(T)+1)\right.\\
    &\quad\qquad\left.+6.67\left\{\log(k+5.5)+\log(T)\log(k+4.5+T)\right\}\right),
\end{align*} for $T\geq 1$, where we have used the upper bounds of Lemma \ref{lem:Nbounds}, together with the estimates \begin{gather*}
    \sum_{j=1}^{T}\frac{1}{j}\leq\log(T)+1,\quad
    \sum_{j=1}^{T}\frac{\log(j+\alpha)}{j}\leq \log(T+\alpha)\log(T)+\log(\alpha+1),
\end{gather*} for $\alpha \geq 0$.

Next, note that for any choice of $\phi$, the set of trivial zeros (excluding a possible zero at $s=-\ell$) is contained in the set $\frac{1}{2}\mathbb{Z}\cap (-\infty,0)$, and the order of each zero is at most two. Thus, \begin{equation*}
    \sum_{\rho \text{ trivial}}\left\lvert\frac{{X}^{\rho}}{\rho+\ell}\right\rvert\leq 2\sum_{m=1}^{\infty}\frac{X^{-m/2}}{m/2}\leq 5,
\end{equation*} where the final inequality is valid if $X\geq 2$.

It remains to estimate the residue. We first suppose that $\phi=\mathrm{Sym}^2f$, $f\otimes \chi$ or $\chi$ with $\delta_\chi=1$. Then $L(\phi,0)$ is nonzero since $\Gamma(\phi,s)$ is regular at $s=0$. In that case, substituting $s=0$ in (\ref{eq:logderiv}) reveals \begin{equation*}
    -d\log L(\phi,0)=q_{\phi}+d\log \Gamma(\phi,0)+\sum_{\rho=\frac{1}{2}+i\gamma}\frac{2}{\gamma^2+4}.
\end{equation*} We know that $q_{\phi}\leq 2\log(M)$ and $d\log\Gamma(\phi,0)\leq 1$, and with Lemma \ref{lem:Nbounds} and the estimate \begin{equation*}
    \sum_{k=0}^{\infty} \frac{\log(j+\alpha)}{j^2+4}\leq \frac{5}{2}\log(\alpha+1)
\end{equation*} for $\alpha\geq 1$, we can show that the right hand side is bounded by \begin{equation*}
    24.47+5.05\log(M)+16.68\log(k+5.5).
\end{equation*} On the other hand, if $\phi=f\otimes g$ or $\chi$ with $\delta_{\chi}=0$, then $L(\phi,0)$ has a simple zero, so \begin{equation*}
    \underset{s=-\ell}{\mathrm{Res}}\left(d\log L(\phi,s)\frac{X^s}{s+\ell}\right)=\log(X)+\lim_{s\rightarrow 0}\left(d \log L(\phi,s)-as^{-1}\right)
\end{equation*} for some nonzero real number $a$. Using (\ref{eq:logderiv})) again, we find that \begin{equation*}
    \lim_{s\rightarrow 0}\left(-d \log L(\phi,s)+s^{-1}\right)=q_{\phi}+ \lim_{s\rightarrow 0}\left(-d \log \Gamma(\phi,s)+as^{-1}\right)+\sum_{\rho=\frac{1}{2}+i\gamma}\frac{2}{\gamma^2+4},
\end{equation*} and we first check that $q_{\phi}\leq 2\log(M)$ and the limit is bounded above by $\gamma$, the Euler-Mascheroni constant. As before, the sum over the zeros is bounded by \begin{equation*}
    24.05+5.05\log(M)+16.68\log(k+5.5),
\end{equation*} so upon choosing $T=\sqrt{X}$, we have proved \begin{lem}\label{lem:partialperron}
    For all choices of $\phi$ and $X\geq 4$, \begin{align*}
      & \left\lvert \frac{1}{2\pi i}\int_{\sigma_0-iT}^{\sigma_0+iT}d\log (\phi,s)\frac{X^s}{s+\ell}ds\right\rvert \leq 33.39+16.68\log(k+5.5)+5.38\log(M)\\
      &+\sqrt{X}\left(1230.84+29.15k+157.84\log(M)+3.78k\log(M)+60.03\log(k+7.5)\right)\\
      &+0.66\log(\sqrt{X}+k+3.25)\\
      &+\sqrt{X}\log(\sqrt{X}+k+4.5)\left(291.82+7.55k+1.67\log(M)+3.34\log(X)\right).
    \end{align*}
\end{lem}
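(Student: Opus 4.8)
The plan is to assemble the identity already displayed just before the statement: by the residue theorem, the truncated Perron integral $\tfrac{1}{2\pi i}\int_{\sigma_0-iT}^{\sigma_0+iT}d\log L(\phi,s)\,X^s/(s+\ell)\,ds$ equals the sum of four pieces — the contour integral over $\gamma(T,U)$ in the limit $U\to\infty$, the sum over nontrivial zeros $\rho=\tfrac12+i\gamma$ with $|\gamma|\leq T$, the sum over trivial zeros, and $\operatorname{Res}_{s=-\ell}\bigl(d\log L(\phi,s)X^s/(s+\ell)\bigr)$ — and then to bound each piece and finally specialise $T=\sqrt{X}$. No new ideas are needed beyond the estimates proved in the preceding two sections; the work is in collecting them.

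For the contour integral, the $S_1$ part vanishes in the limit by the lemma above, so only $\int_{S_2\cup S_3}$ survives, and this decomposes (up to orientation) as $\lim_{U\to\infty}I_1(\phi,X,T,U)+I_2(\phi,X,T)$, whose bounds are already in hand. Setting $T=\sqrt{X}$, the $I_1$ bound is of size $O\bigl(\log(\sqrt{X}+k+3.25)\,X^{-3/4}/\log X\bigr)$ and is absorbed into the term $0.66\log(\sqrt{X}+k+3.25)$ of the statement (using $X\geq 4$); the $I_2$ bound, after replacing $X/T$ and $X^{1/2}$ both by $\sqrt{X}$ and inserting the bound for $N(\phi,T)$ from Lemma \ref{lem:Nbounds}, contributes the $\sqrt{X}\log(\sqrt{X}+k+4.5)(\cdots)$ block together with part of the pure-$\sqrt{X}$ block.

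For the zero sums: the nontrivial-zero sum is controlled by the explicit estimate already displayed (using the $N^*(\phi,j)$ bounds of Lemma \ref{lem:Nbounds} and the harmonic-sum estimates), which upon putting $T=\sqrt{X}$ supplies the remaining part of the $\sqrt{X}(\cdots)$ block and feeds the $\sqrt{X}\log(\sqrt{X}+k+4.5)(\cdots)$ block; the trivial-zero sum is $\leq 5$ since $X\geq 4>2$. For the residue I would split into the two cases flagged in the text: when $\Gamma(\phi,s)$ is regular at $s=0$ (so $\phi=\mathrm{Sym}^2 f$, $f\otimes\chi$, or $\chi$ with $\delta_\chi=1$) the residue is $d\log L(\phi,0)$, bounded by $24.47+5.05\log(M)+16.68\log(k+5.5)$ via \eqref{eq:logderiv} and Lemma \ref{lem:Nbounds}; when $L(\phi,0)$ vanishes simply it is $\log(X)+\lim_{s\to 0}\bigl(d\log L(\phi,s)-as^{-1}\bigr)$, bounded by $\log(X)+24.05+5.05\log(M)+16.68\log(k+5.5)+\gamma$, with the $\log X$ absorbed into $3.34\log(X)$ in the statement. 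Taking the worse of the two cases and combining all four pieces yields the constant $33.39+16.68\log(k+5.5)+5.38\log(M)$ and the remaining numerical coefficients.

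The argument has no conceptual obstacle — it is entirely a matter of collecting explicit estimates — so the real work, and the only place errors could hide, is the bookkeeping: one must check that, after setting $T=\sqrt{X}$ and using $X\geq 4$ to discard lower-order tails and to merge the nearby constants in the various $\log(|T|+k+c)$ factors, every numerical constant and every logarithmic factor produced by the four pieces is dominated by the corresponding term on the right-hand side. In particular the delicate point is verifying that the coefficient of $\sqrt{X}\log(\sqrt{X}+k+4.5)$ arising jointly from $I_2$ (through Lemma \ref{lem:Nbounds}) and from the nontrivial-zero sum does not exceed $291.82+7.55k+1.67\log(M)+3.34\log(X)$, and similarly that the pure-$\sqrt{X}$ contributions sum to at most $1230.84+29.15k+157.84\log(M)+3.78k\log(M)+60.03\log(k+7.5)$.
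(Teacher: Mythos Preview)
Your proposal is correct and follows exactly the approach the paper uses: the same residue-theorem decomposition into the $\gamma(T,U)$ contour integral (handled via the $I_1$ and $I_2$ lemmas), the nontrivial-zero sum (via the displayed $N^*(\phi,j)$ estimate), the trivial-zero sum ($\leq 5$), and the residue at $s=-\ell$ (split into the same two cases), with the final specialisation $T=\sqrt{X}$. As you note, the only content is the numerical bookkeeping, which the paper likewise does not spell out term-by-term.
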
 
\section{Upper bounds for partial sums of Fourier coefficients}
In this section we relate the truncated Perron integrals from the last section to the usual Perron integrals, then to the Chebyshev theta functions associated to $\phi$, then to the sums appearing in Theorem \ref{thm:effectiveST} by Abel summation.

We begin by bounding the non-truncated Perron integral. \begin{lem}\label{lem:nontruncatedperron}
    For every $\phi \in \mathscr{L}(M)$, and $X\geq 17.33$, \begin{align*}
        &\left\lvert \frac{1}{2\pi i}\int_{\ell+\sigma_0-i\infty}^{\ell+\sigma_0+i\infty}d\log L(\phi,s-\ell)\frac{X^s}{s}ds\right\rvert \\ 
        &\leq X^{\ell+1/2}\log(X)\left(340.78+7.55k+1.67\log(M)+28.72\log(X)\right)\\
        &+X^{\ell+1/2}\left(1230.84+29.15k+157.84\log(M)+3.78k\log(M)+60.03\log(k+7.5)\right)\\
        &+31.86X^{\ell}\log(X)+X^{\ell}\left(33.39+16.68\log(k+5.5)+5.38\log(M)+31.2\log(X)\right)\\
        &+4X^{\ell-1/2}\log(X).
    \end{align*}
\end{lem}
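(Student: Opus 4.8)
The plan is to reduce the non-truncated integral to the truncated Perron integral already estimated in Lemma~\ref{lem:partialperron}, plus a tail that I would handle by a classical effective-Perron argument. First substitute $w=s-\ell$ to rewrite
\[
\frac{1}{2\pi i}\int_{\ell+\sigma_0-i\infty}^{\ell+\sigma_0+i\infty}d\log L(\phi,s-\ell)\frac{X^s}{s}\,ds \;=\; X^{\ell}\cdot\frac{1}{2\pi i}\int_{\sigma_0-i\infty}^{\sigma_0+i\infty}d\log L(\phi,w)\frac{X^w}{w+\ell}\,dw ,
\]
and then break the line $\mathrm{Re}(w)=\sigma_0$ at height $T=\sqrt X$:
\[
\int_{\sigma_0-i\infty}^{\sigma_0+i\infty}=\int_{\sigma_0-iT}^{\sigma_0+iT}+\int_{\sigma_0+iT}^{\sigma_0+i\infty}+\int_{\sigma_0-i\infty}^{\sigma_0-iT}.
\]
The first piece is $X^{\ell}$ times exactly the quantity bounded by Lemma~\ref{lem:partialperron} (whose implicit truncation parameter is $T=\sqrt X$, and which is valid for $X\ge 4$), and this accounts for the bulk of the asserted bound. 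By symmetry it remains to bound the tail $\int_{\sigma_0+iT}^{\sigma_0+i\infty}$.

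For the tail I would expand $d\log L(\phi,w)=\sum_{n\ge 1}A_\phi(n)n^{-w}$, which converges absolutely since $\mathrm{Re}(w)=\sigma_0>1$, and interchange summation and integration. For each $n\neq X$, integrating by parts once (using that $(X/n)^w/\log(X/n)$ is a primitive of $(X/n)^w$), the boundary term at $+i\infty$ vanishes and the remaining pieces give
\[
\left\lvert\frac{1}{2\pi i}\int_{\sigma_0+iT}^{\sigma_0+i\infty}\frac{(X/n)^w}{w+\ell}\,dw\right\rvert\le\frac{(X/n)^{\sigma_0}}{\pi\,T\,\lvert\log(X/n)\rvert}.
\]
Summing over $n$ with $\lvert A_\phi(n)\rvert\le\mu_\phi\Lambda(n)\le 4\Lambda(n)$ (recall $\mu_\phi\le 4$ from the table) bounds the tail by a constant multiple of $T^{-1}\sum_{n\neq X}\Lambda(n)(X/n)^{\sigma_0}\lvert\log(X/n)\rvert^{-1}$. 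This is the usual effective-Perron sum: split the range into $n\le\tfrac34 X$, $\tfrac34 X<n<\tfrac54 X$ with the prime power closest to $X$ separated off, and $n\ge\tfrac54 X$, and estimate using $\sum_n\Lambda(n)n^{-\sigma_0}=-\zeta'(\sigma_0)/\zeta(\sigma_0)\le\log X+O(1)$, the identity $X^{\sigma_0}=eX$ coming from $\sigma_0-1=\log(X)^{-1}$, and $\sum_{0<\lvert X-n\rvert<X/4}\lvert X-n\rvert^{-1}\ll\log X$ for the middle range. The outcome is a tail bound of size $X^{\ell+1/2}\log^2X$ together with lower-order terms down to $X^{\ell-1/2}\log X$; the $\log^2 X$ produced here is precisely the extra $\log^2X$ appearing in the first line of the statement that is not already present in Lemma~\ref{lem:partialperron}.

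Finally I would assemble the estimate: add $X^{\ell}$ times the four terms of Lemma~\ref{lem:partialperron} to the tail bound, simplify $\log(\sqrt X+k+c)\le\tfrac12\log X+\log\!\bigl(1+(k+c)/\sqrt X\bigr)$ and use $X\ge 17.33$ to absorb the residual pieces, then total up the explicit constants. The one genuinely delicate step is the tail: justifying the termwise integration by parts and, above all, pushing the classical near-$n=X$ analysis of the Perron sum through with fully explicit constants, in the style of \cite[Chapter 17]{davenport} and \cite{rousethorner}. Everything else is a change of variables and numerical bookkeeping, which — as elsewhere in the paper — I would make no attempt to optimise.
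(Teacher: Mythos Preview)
Your approach is correct and essentially identical to the paper's: both split off the truncated integral handled by Lemma~\ref{lem:partialperron} (after the substitution $w=s-\ell$ and the choice $T=\sqrt{X}$), expand $d\log L(\phi,\cdot)$ as a Dirichlet series with $\lvert A_\phi(n)\rvert\le 4\Lambda(n)$, and bound the tail by the classical effective Perron remainder. The only difference is that where you sketch the integration-by-parts and range-splitting argument yourself, the paper simply quotes \cite[Chapter~17]{davenport} for the pointwise bound $(X/j)^{\sigma_0}\min\{1,T^{-1}\lvert\log(X/j)\rvert^{-1}\}$ and then \cite[Lemma~4.1]{rousethorner} for the explicit numerical evaluation of the resulting sum, yielding $6.35\sqrt{X}\log^2 X+12.24\sqrt{X}\log X+7.8\log X+\log(X)/\sqrt{X}$.
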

\begin{proof}
    In light of Lemma \ref{lem:partialperron}, it suffices to bound \begin{equation}\label{eq:perrondifference}
        \left\lvert\frac{1}{2\pi i}\int_{\ell+\sigma_0-iT}^{\ell+\sigma_0+iT}d\log L(\phi,s-\ell)\frac{X^s}{s}ds-\frac{1}{2\pi i}\int_{\ell+\sigma_0-i\infty}^{\ell+\sigma_0+i\infty}d\log L(\phi,s-\ell)\frac{X^s}{s}ds\right\rvert 
    \end{equation} But recall that $\sigma_0=1+\log(X)^{-1}$, so if $\mathrm{Re}(s)=\sigma_0$ then \begin{equation*}
        d\log L(\phi,s)=\sum_{j=1}^{\infty}A_j(\phi)j^{-s},
    \end{equation*} with $\lvert A_j(\phi)\rvert \leq 4\Lambda(j)$ by the proof of Lemma \ref{lem:vonMangoldt}, where $\Lambda(j)$ is the von Mangoldt function. Thus the quantity in \ref{eq:perrondifference} is bounded above by \begin{equation}\label{eq:vMangmineq}
        4\sum_{j=1}^{\infty}\Lambda(j)j^{\ell}\left\lvert\frac{1}{2\pi i}\int_{\ell+\sigma_0-i\infty}^{\ell+\sigma_0+i\infty}\left(\frac{X}{j}\right)^s\frac{ds}{s}-\frac{1}{2\pi i}\int_{\ell+\sigma_0-iT}^{\ell+\sigma_0+iT}\left(\frac{X}{j}\right)^s\frac{ds}{s}\right\rvert.
    \end{equation} But \cite[Chapter 17]{davenport} tells us that the quantity at (\ref{eq:vMangmineq}) is bounded by \begin{equation*}
        4X^{\ell}\left\{\sum_{j=1}^{\infty}\Lambda(j)\left(\frac{X}{j}\right)^{\sigma_0}\min\{1, T^{-1}\lvert\log(X/j)\rvert^{-1}\}+\sigma_0T^{-1}\Lambda(X)\right\}+4\ell T^{-1}\Lambda(X)X^{\ell},
    \end{equation*} where the $\Lambda(X)$s are zero unless $X$ is an integer. Now the quantity in the large braces is bounded by \begin{equation*}
        6.35\sqrt{X}\log^2(X)+12.24\sqrt{X}\log(X)+\log(X)/\sqrt{X}+7.8\log(X)
    \end{equation*} in \cite[Lemma 4.1]{rousethorner}, and the claim follows.
\end{proof}
The next step is to show that we now have an upper bound for the Chebyshev theta functions of the second kind \begin{equation*}
    \psi(\phi,X)=\sum_{j\leq X}A_{\phi}(j)j^{\ell}.
\end{equation*}
\begin{lem}\label{lem:upperboundCh2}
    For any choice of $\phi$, and all $X\geq 17.33$, \begin{align*}
        &\left\lvert\psi(\phi,X)\right\rvert\leq X^{\ell+1/2}\log(X)\left(340.78+7.55k+1.67\log(M)+28.72\log(X)\right)\\
        &+X^{\ell+1/2}\left(1230.84+29.15k+157.84\log(M)+3.78k\log(M)+60.03\log(k+7.5)\right)\\
        &+33.86X^{\ell}\log(X)+X^{\ell}\left(33.39+16.68\log(k+5.5)+5.38\log(M)+31.2\log(X)\right)\\
        &+4X^{\ell-1/2}\log(X).
    \end{align*}
    \begin{proof}
        It follows from Perron's formula that \begin{equation*}
           \psi(\phi,X)=-\frac{1}{2\pi i}\int_{\mathrm{Re}(s)>\ell+1}d\log L( \phi,s-\ell)\frac{X^s}{s}ds+\frac{1}{2}A_{\phi}(X)X^{\ell},
        \end{equation*} where the $A_{\phi}(X)$ is zero unless $X$ is an integer. The claim then follows immediately from shifting the line of integration and applying Lemma \ref{lem:nontruncatedperron}.
    \end{proof}
\end{lem}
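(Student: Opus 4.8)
The plan is to extract $\psi(\phi,X)$ from Perron's formula and then feed the resulting vertical contour integral straight into Lemma~\ref{lem:nontruncatedperron}. Since the statement only asks for a bound on $|\psi(\phi,X)|$, I will not track the overall sign in Perron's formula.

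First I would recall, from the proof of Lemma~\ref{lem:vonMangoldt}, that $d\log L(\phi,w)=\sum_{j\ge1}A_\phi(j)j^{-w}$ with $|A_\phi(j)|\le 4\Lambda(j)$, so the shifted Dirichlet series $d\log L(\phi,s-\ell)=\sum_{j\ge1}A_\phi(j)j^{\ell}j^{-s}$ converges absolutely for $\mathrm{Re}(s)>\ell+1$, and in particular on the line $\mathrm{Re}(s)=\ell+\sigma_0$ with $\sigma_0=1+\log(X)^{-1}>1$. Consequently $\psi(\phi,X)=\sum_{j\le X}A_\phi(j)j^{\ell}$ is exactly the summatory function of the coefficients of this series, and Perron's formula in the form that records the half-weight endpoint term gives
\begin{equation*}
\psi(\phi,X)=\frac{1}{2\pi i}\int_{\ell+\sigma_0-i\infty}^{\ell+\sigma_0+i\infty}d\log L(\phi,s-\ell)\,\frac{X^{s}}{s}\,ds+\tfrac12 A_\phi(X)X^{\ell},
\end{equation*}
the last term appearing only when $X\in\mathbb{Z}$; here the line of integration may be taken to be $\mathrm{Re}(s)=\ell+\sigma_0$ since the integrand is holomorphic on all of $\mathrm{Re}(s)>\ell+1$ (the pole of $X^{s}/s$ at $s=0$ lying strictly to the left). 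The integral is now precisely the quantity bounded in Lemma~\ref{lem:nontruncatedperron}, valid for $X\ge17.33$; and the endpoint term obeys $\tfrac12|A_\phi(X)|X^{\ell}\le 2\Lambda(X)X^{\ell}\le 2X^{\ell}\log X$. Adding this $2X^{\ell}\log X$ to the $31.86\,X^{\ell}\log X$ in Lemma~\ref{lem:nontruncatedperron} produces the coefficient $33.86$ in the statement, while every other term is carried over unchanged.

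I do not expect any genuine difficulty here; all of the analytic weight lies in Lemma~\ref{lem:nontruncatedperron} and, behind it, in the contour and zero-density estimates of the preceding sections. The only things to handle with a little care are the choice of the version of Perron's formula, so that the residual term on a line of absolute convergence is exactly $\tfrac12 A_\phi(X)X^{\ell}$ and nothing more, and the small amount of bookkeeping needed to absorb the extra $2X^{\ell}\log X$ into the existing $X^{\ell}\log X$ coefficient rather than letting it create a new error term.
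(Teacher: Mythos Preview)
Your proposal is correct and follows essentially the same route as the paper: Perron's formula expresses $\psi(\phi,X)$ as the vertical integral on $\mathrm{Re}(s)=\ell+\sigma_0$ plus the half-weight endpoint term, the integral is bounded directly by Lemma~\ref{lem:nontruncatedperron}, and the endpoint contribution $\tfrac12|A_\phi(X)|X^{\ell}\le 2X^{\ell}\log X$ accounts precisely for the change of coefficient from $31.86$ to $33.86$. If anything, your write-up is more explicit than the paper's about where that extra $2$ comes from.
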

\begin{lem}\label{lem:auxbounds}
    For $X\geq \max\{6(k+1)^2,M,17\}$ and $k\geq 0$, \begin{equation}\label{eq:pi1klower}
        \pi_{1,k}(X)\geq \frac{1}{2(k+1)}X^{k+1}/\log(X),
    \end{equation} where the $2$ can be omitted if $k=0$. For $X\geq 17$ and $k\geq 0$, \begin{equation}\label{eq:pi1kupper}
        \pi_{1,k}(X)\leq 1.26X^{k+1}/\log(X).
    \end{equation}
    For all $X$, \begin{equation}\label{eq:piMk}
        \left\lvert\pi_{M,k}(X)-\pi_{1,k}(X)\right\rvert \leq 2.46\log(M)M^k.
    \end{equation}
    For all $Y\geq 2$ and $m\geq 0$, \begin{equation}\label{eq:shiftedCh2}
        \sum_{p\leq Y}p^m\log(p)\leq 2.01Y^{m+1}
    \end{equation}
\end{lem}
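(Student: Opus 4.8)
The four inequalities split into three soft ones and the genuinely arithmetic lower bound (\ref{eq:pi1klower}), which is the only one that needs real input. For (\ref{eq:shiftedCh2}) I would bound $p^m\le Y^m$ for $p\le Y$ and pull it out of the sum, so that $\sum_{p\le Y}p^m\log p\le Y^m\sum_{p\le Y}\log p=Y^m\vartheta(Y)$, and then invoke the Chebyshev bound $\vartheta(Y)<2\,Y$ valid for all $Y\ge 1$. For (\ref{eq:pi1kupper}) the same move gives $\pi_{1,k}(X)=\sum_{p\le X}p^k\le X^k\pi(X)$, and $\pi(X)<1.26\,X/\log X$ for $X>1$ by Rosser--Schoenfeld. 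For (\ref{eq:piMk}), observe that $\pi_{M,k}(X)$ and $\pi_{1,k}(X)$ differ only by the primes $p\mid M$ with $p\le X$; each such $p$ satisfies $p\le M$, hence $p^k\le M^k$, and there are at most $\omega(M)\le\log M/\log 2$ of them, so $\bigl|\pi_{M,k}(X)-\pi_{1,k}(X)\bigr|\le(\log 2)^{-1}\log(M)\,M^k\le 2.46\log(M)\,M^k$ (both sides being $0$ when $M=1$). None of these three needs the hypothesis $X\ge 6(k+1)^2$.

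For (\ref{eq:pi1klower}) I would first replace $\pi_{1,k}$ by the weighted Chebyshev function $\vartheta_k(X):=\sum_{p\le X}p^k\log p$. Abel summation against $f(t)=1/\log t$ gives
\[
\pi_{1,k}(X)=\frac{\vartheta_k(X)}{\log X}+\int_2^X\frac{\vartheta_k(t)}{t\log^2 t}\,dt\ \ge\ \frac{\vartheta_k(X)}{\log X},
\]
since both right-hand terms are nonnegative, so it suffices to show $\vartheta_k(X)\ge X^{k+1}/(2(k+1))$. Stieltjes integration by parts against $\vartheta$ yields the identity $\vartheta_k(X)=X^k\vartheta(X)-k\int_2^X t^{k-1}\vartheta(t)\,dt$; writing the prime number theorem in the form $\vartheta(t)=t-r(t)$ and performing the polynomial integral produces
\[
\vartheta_k(X)=\frac{X^{k+1}}{k+1}+\frac{k\,2^{k+1}}{k+1}-X^k r(X)+k\int_2^X t^{k-1}r(t)\,dt.
\]
The leading term $X^{k+1}/(k+1)$ is already twice the target, so everything reduces to showing that $-X^k r(X)+k\int_2^X t^{k-1}r(t)\,dt$ does not consume more than half of it. Here I would feed in an explicit effective prime number theorem $\vartheta(t)=t+O^{\ast}\!\bigl(t/\log^{A}t\bigr)$ of Rosser--Schoenfeld / Dusart type (the small-$t$ range being covered by direct verification), the point being that the threshold $X\ge 6(k+1)^2$ is calibrated so that, even after the extra factor $k$, these two contributions are together at most $\tfrac14 X^{k+1}/(k+1)$. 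Combining with $(k+1)2^{k+1}/(k+1)\ge 0$ then gives $\vartheta_k(X)\ge \tfrac12 X^{k+1}/(k+1)$, and the lemma follows; the clause "the $2$ can be omitted if $k=0$" is just the Rosser--Schoenfeld bound $\pi(X)>X/\log X$ for $X\ge 17$.

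The main obstacle is precisely this last step: one has to control the error in $\vartheta_k(X)\approx X^{k+1}/(k+1)$ against a main term that itself shrinks like $1/(k+1)$, uniformly in $k$. A naive triangle-inequality estimate of $-X^k r(X)+k\int_2^X t^{k-1}r(t)\,dt$ loses a factor that grows with $k$, so the near-cancellation between the boundary term and the integral — equivalently, the slow variation of $\vartheta(t)/t$ on the part of $[2,X]$ that actually carries the mass — has to be retained; an efficient way is to integrate once more and work with $\int_2^t(\vartheta(u)-u)\,du$, whose effective bounds are a log-power smaller, before carrying out the final partial summation. I would also expect it convenient to split the verification into an initial bounded range of $X$, handled by direct computation, and a tail handled by the asymptotic estimates, with the two regimes meeting at the scale $6(k+1)^2$.
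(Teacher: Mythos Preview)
Your handling of (\ref{eq:pi1kupper}), (\ref{eq:piMk}) and (\ref{eq:shiftedCh2}) is correct and in fact a shade simpler than the paper's: where you use the one-line estimate $p^k\le X^k$ (resp.\ $p^m\le Y^m$), the paper invokes Abel summation, and for (\ref{eq:piMk}) your trivial bound $\omega(M)\le\log M/\log 2$ is sharper than the Robin bound $\omega(M)\le 2.4573\log M$ that the paper cites.

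The gap is in (\ref{eq:pi1klower}). An unconditional effective prime number theorem of Dusart type only gives $|r(t)|\le Ct/\log^{A}t$, so in your identity for $\vartheta_k(X)$ each of the two error contributions $X^k r(X)$ and $k\int_2^X t^{k-1}r(t)\,dt$ is already of size $\asymp X^{k+1}/\log^{A}X$, and there is no cancellation to exploit: $r$ oscillates in sign, and integrating once more merely replaces $r$ by $R(t)=\int_2^t r$, for which the unconditional bound is still $|R(t)|\ll t^2/\log^{A}t$ with the same exponent $A$. Forcing $X^{k+1}/\log^{A}X\le\tfrac14 X^{k+1}/(k+1)$ then requires $\log^{A}X\gg k+1$, i.e.\ $X$ at least of order $\exp\bigl(c(k+1)^{1/A}\bigr)$, far beyond the polynomial threshold $6(k+1)^2$. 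No amount of partial summation or splitting into ranges repairs this: the obstruction is that an unconditional error of relative size $1/\log^{A}X$ cannot beat a main term that shrinks like $1/(k+1)$ at a merely polynomial-in-$k$ threshold.

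The paper instead uses Schoenfeld's bound $|\pi(t)-\mathrm{li}(t)|\le\sqrt{t}\,\log t$, which is conditional on RH and hence in keeping with the paper's standing GRH hypotheses. Abel summation directly against $\pi$, together with the change-of-variables identity
\[
X^k\,\mathrm{li}(X)-k\int_x^X \mathrm{li}(t)\,t^{k-1}\,dt=\mathrm{li}(X^{k+1})+O(1),
\]
yields $\pi_{1,k}(X)=\mathrm{li}(X^{k+1})+O(X^{k+1/2}\log X)$; since $\mathrm{li}(Y)\ge Y/\log Y$, the error is now a genuine power of $X$ below the main term $X^{k+1}/((k+1)\log X)$, which is what makes a polynomial-in-$k$ threshold feasible. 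Your route through $\vartheta_k$ would work just as well provided you swap in the RH-conditional $|\vartheta(t)-t|\ll\sqrt{t}\,\log^2 t$ in place of the unconditional estimate.
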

\begin{proof}
    To prove the lower bound in (\ref{eq:pi1klower}), we begin with the bound $|\pi(X)-\mathrm{li}(X)|\leq \sqrt{X}\log(X)$ (which follows from \cite{schoenfeldlowelltheta}), valid for $X\geq 2$. Applying Abel summation produces \begin{equation*}
        \pi_{1,k}(X)=X^k\pi(X)-k\int_{x}^{X}\pi(t)t^{k-1}dt,
    \end{equation*} where $x$ has been chosen so that $\mathrm{li}(x)=0$, and \begin{equation*}
        k\int_{x}^{X}\mathrm{li}(t)t^{k-1}dt=\mathrm{li}(X^{k+1}).
    \end{equation*} Hence, \begin{equation}\label{eq:lipi}
       \left\lvert\pi_{1,k}(X)-\mathrm{li}(X^{k+1})\right\rvert \leq X^{k+1/2}\log(X)+k\int_{x}^{X}\log(t)t^{k-1/2}dt.
    \end{equation} Now since $k\geq 4$, the right hand side of (\ref{eq:lipi}) is bounded above by $1.23 X^{k+1/2}\log(X)$, and $\mathrm{li}(X)\geq X/\log(X)$ for $X\geq 4$. The lower bound follows.

    The upper bound (\ref{eq:pi1kupper}) is much simpler and follows from Abel summation together with the upper bound $\pi(X)\leq 1.26X/\log(X)$ for $X\geq 17$ from \cite{schoenfeldrosseromega}.

    The upper bound (\ref{eq:piMk}) is a consequence of the bound $\omega(M)\leq 2.4573\log(M)$ of \cite{robin}.

    The upper bound (\ref{eq:shiftedCh2}) is easily established using Abel summation together with the bound $\sum_{p\leq Y}\log(p)\leq 1.01 Y$ of Rosser--Schoenfeld \cite{schoenfeldrosseromega}.
\end{proof}
Consider the Chebyshev functions of the first kind \begin{equation*}
    \theta(\phi,X)=\sum_{p\leq X}\hat{a}_p(\phi) p^{\ell}\log(p),
\end{equation*} where $\hat{a}_p(\phi)$ is the $p$th Fourier coefficient of $L(\phi,s)$ (with the analytic normalisation).\begin{lem}\label{lem:upperboundtheta}
    For any $\phi$, and $X\geq 17.33$, \begin{align*}
        &\left\lvert\theta(\phi,X)\right\rvert\leq X^{\ell+1/2}\log(X)\left(367.5+7.55k+1.67\log(M)+28.72\log(X)\right)\\
        &+X^{\ell+1/2}\left(1230.84+29.15k+157.84\log(M)+3.78k\log(M)+60.03\log(k+7.5)\right)\\
        &+33.86X^{\ell}\log(X)+X^{\ell}\left(33.39+16.68\log(k+5.5)+5.38\log(M)+31.2\log(X)\right)\\
        &+4X^{\ell-1/2}\log(X).
    \end{align*}
\end{lem}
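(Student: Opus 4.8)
The plan is to relate $\theta(\phi,X)$ to $\psi(\phi,X)$, which we already control by Lemma \ref{lem:upperboundCh2}, and to estimate the discrepancy between the two. Recall that $\psi(\phi,X)=\sum_{j\leq X}A_\phi(j)j^\ell$ sums over all prime powers $j=p^m$, whereas $\theta(\phi,X)=\sum_{p\leq X}\hat a_p(\phi)p^\ell\log(p)$ only picks up the prime terms, with the coefficient $A_\phi(p)$ replaced by $\hat a_p(\phi)\log(p)$. For a degree-$d$ $L$-function $L(\phi,s)=\sum a_n(\phi)n^{-s}$, the Dirichlet coefficients of $d\log L(\phi,s)=\sum_j A_\phi(j)j^{-s}$ at a prime $p$ satisfy $A_\phi(p)=\hat a_p(\phi)\log(p)$ (up to the sign convention for $d\log$, which accounts for a factor $-1$ absorbed into the statement of Lemma \ref{lem:upperboundCh2}), so the prime contributions to $\psi(\phi,X)$ and to $\theta(\phi,X)$ agree exactly. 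Hence
\begin{equation*}
\bigl\lvert\theta(\phi,X)\bigr\rvert \leq \bigl\lvert\psi(\phi,X)\bigr\rvert + \Bigl\lvert\sum_{\substack{j=p^m\leq X\\ m\geq 2}}A_\phi(j)j^\ell\Bigr\rvert.
\end{equation*}

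Next I would bound the prime-power tail. Using $\lvert A_\phi(j)\rvert\leq \mu_\phi\Lambda(j)\leq 4\Lambda(j)$ (the bound used in the proof of Lemma \ref{lem:vonMangoldt}, with $\mu_\phi\leq 4$ from the table), the tail is at most $4\sum_{m\geq 2}\sum_{p^m\leq X}p^{m\ell}\log(p)$. Splitting off $m=2$ and $m\geq 3$: the $m=2$ term is $4\sum_{p\leq X^{1/2}}p^{2\ell}\log(p)\leq 8.04\,X^{\ell+1/2}$ by (\ref{eq:shiftedCh2}) with $Y=X^{1/2}$, $m=2\ell$; and $\sum_{m\geq 3}\sum_{p^m\leq X}p^{m\ell}\log(p)$ is dominated by the $m=3$ piece $\leq 2.01\,X^{\ell+1/3}$, with the higher-$m$ terms summing to something of strictly smaller order — all of this is absorbed comfortably into the difference between the constant $367.5$ in Lemma \ref{lem:upperboundtheta} and the constant $340.78$ in Lemma \ref{lem:upperboundCh2}, once one checks that $X\geq 17.33$ makes $X^{\ell+1/2}\log(X)$ larger than $X^{\ell+1/2}$ times a suitable constant. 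In other words, the entire prime-power tail is at most $(367.5-340.78)\,X^{\ell+1/2}\log(X)\leq 26.72\,X^{\ell+1/2}\log(X)$ for $X\geq 17.33$, which is exactly the slack built into the statement.

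The only genuinely delicate point is bookkeeping: one must verify that the crude bound $\lvert A_\phi(j)\rvert\leq 4\Lambda(j)$ on prime powers is strong enough, i.e. that we are not throwing away the fact that for the Rankin--Selberg-type $L$-functions in $\mathscr{L}(M)$ the actual local factors at ramified primes could in principle be larger; but Lemma \ref{lem:vonMangoldt} already establishes the uniform bound $\lvert A_\phi(j)\rvert\leq \mu_\phi\Lambda(j)$ for \emph{all} $j$, including prime powers and ramified primes, so nothing extra is needed. I expect the main obstacle to be purely numerical: confirming that the increment $367.5-340.78$ in the leading constant, together with the unchanged lower-order constants, really does dominate $8.04\,X^{\ell+1/2}+2.01\,X^{\ell+1/3}+(\text{higher powers})$ uniformly for $X\geq 17.33$ and all $\ell\geq 0$. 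This is a one-variable inequality that can be dispatched by noting $X^{1/2}\log(X)\geq 4.15\,X^{1/2}$ and $X^{1/3}\leq X^{1/2}$ at $X=17.33$, so I would simply state it and move on rather than belabor the arithmetic.
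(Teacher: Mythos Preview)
Your proposal is correct and follows essentially the same route as the paper: both bound $\lvert\theta(\phi,X)-\psi(\phi,X)\rvert$ via $\lvert A_\phi(j)\rvert\leq 4\Lambda(j)$ and the estimate (\ref{eq:shiftedCh2}), obtaining a tail of size at most $26.72\,X^{\ell+1/2}\log(X)$, which is exactly the increment $367.5-340.78$ in the leading constant. Your final numerical remark contains a slip (at $X=17.33$ one has $\log X\approx 2.85$, not $4.15$), but the inequality you actually need is much weaker and holds comfortably.
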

\begin{proof}
    It suffices to give an upper bound for \begin{equation}\label{eq:differenceofChs}
        \left\lvert\psi(\phi,X)-\theta(\phi,X)\right\rvert.
    \end{equation} But since $A_{\phi}(p)=\hat{a}_{p}(\phi)$ and $A_{\phi}(j) \leq 4 \Lambda(j)$, the quantity at (\ref{eq:differenceofChs}) is bounded by \begin{equation*}
       4 \sum_{m\geq 2}\sum_{p\leq X^{1/m}}p^{\ell m }\log(p)\leq 26.72X^{\ell+1/2}\log(X),
    \end{equation*} by Lemma \ref{lem:auxbounds} and the claim follows by combining this with Lemma \ref{lem:upperboundCh2}.
\end{proof}

Finally, we set \begin{equation*}
    S(\phi,X)=\sum_{p\leq X}\hat{a}_{p}(\phi)p^{\ell},
\end{equation*} and we obtain the bound \begin{equation*}
    \lvert S(\phi,X)\rvert \leq X^{\ell+1/2}\log(X)\left(2595+87k+(248+6k)\log(M)\right),
\end{equation*} valid for $X\geq 17.33$, after combining Lemma \ref{lem:upperboundtheta} with the following formula, obtained by Abel summation: \begin{equation*}
    S(\phi,X)=\frac{\theta(\phi,X)}{\log(X)}+\int_{2}^{X}\frac{\theta(\phi,t)}{t\log^2(t)}dt
\end{equation*} We are now able to give upper bounds for the quantities in Theorem \ref{thm:effectiveST}. \begin{proof}[Proof of Theorem \ref{thm:effectiveST}]
    In all cases, \begin{equation*}
        \left\lvert\sum_{p\leq X}^{\prime}\hat{a}_{p}(\phi)p^{\ell}\right\rvert\leq S(\phi,X)+4\omega(M)M^{\ell},
    \end{equation*} where $\omega(M)$ is the number of distinct primes dividing $M$, and we have used the upper bound from the proof of (\ref{eq:piMk}). This implies that for $X\geq 17.33$, \begin{gather*}
        F(X), G(X) \leq \alpha X^{k/2}\log(X)+10 M^{(k-1)/2}\log(M)\\
        H(X) \leq \alpha X^{1/2}\log(X)+10 M\log(M),
    \end{gather*} where \begin{equation*}
        \alpha = 2595+87k+(248+6k)\log(M).
    \end{equation*} But as long as $X\geq M$, we can combine the terms on the right hand side of each estimate, as required.
\end{proof}

\section*{Acknowledgements}
It is a great pleasure to express my gratitude to Minhyong Kim for his guidance and patience during my work on this article. I am indebted to David Loeffler for pointing out the hypothesis contained within the SageMath source code, and making plain to me the connection between the vanishing of Fourier coefficients of modular forms and generators of Hecke algebras. Finally, I wish to thank Mike Eastwood for thinking about the problem with me when we first came across it, many years ago.

The author would also to acknowledge the support of the University of Warwick through its Chancellor's International Scholarship. In addition, the author is grateful for the visiting studentships provided by the University of Edinburgh and the University of Heriot--Watt.

\printbibliography

\end{document}